\crefname{defn}{definition}{definitions}
\Crefname{def}{Definition}{Definitions}
\crefname{thm}{theorem}{theorems}
\Crefname{thm}{Theorem}{Theorems}
\newcommand{\N}{{\mathbb N}}
\renewcommand{\P}{{\mathbb P}}
\newcommand{\Q}{{\mathbb Q}}
\newcommand{\ba}{{\mathbf a}}
\newcommand{\bb}{{\mathbf b}}
\newcommand{\bx}{{\mathbf x}}
\newcommand{\kk}{{\sf k}}
\newcommand{\s}{\sigma}
\def\sym{S}
\def\S{\operatorname{Stab}}
\theoremstyle{plain} %% This is the default, anyway
\newtheorem{thm}{Theorem}[section]
\newtheorem*{introthm*}{Theorem}
\newtheorem{cor}[thm]{Corollary}
\newtheorem{lem}[thm]{Lemma}
\newtheorem{prop}[thm]{Proposition}
\newtheorem{probl}[thm]{Problem}
\newtheorem*{oprobl*}{Open Problem}
\newtheorem{quest}[thm]{Question}
\theoremstyle{definition}
\newtheorem{defn}[thm]{Definition}
\newtheorem{ex}[thm]{Example}
\theoremstyle{remark}
\numberwithin{equation}{section}  %% equation numbering
\title{Products and Powers of Principal Symmetric Ideals}
\author[Polymath 2023]{Eric Dannetun}
\address{LTH, Faculty of Engineering, Lund University, Lund, 22100, Sweden}
\author[]{Bruce Fang}
\address{Department of Mathematics and Statistics, Williams College, Williamstown, MA 01267 USA}
\author[]{Riccardo Formenti}
\address{Department of Mathematics, Universit\`{a} degli Studi di Milano, Milan, 20133, Italy}
\author[]{Bo Y. Gao}
\address{Department of Mathematics, University of Toronto, Toronto, Ontario, M5S 2E4, Canada}
\author[]{Juliann Geraci}
\address{Mathematics Department, University of Nebraska--Lincoln, Lincoln, NE, 68588, USA.}
\author[]{Ross Kogel}
\address{Department of Mathematics, Statistics, and Computer Science, Macalester College, Saint Paul, MN, 55105, USA.}
\author[]{Yuelin Li}
\address{Department of Mathematical Sciences,
Carnegie Mellon University, 
Pittsburgh, PA, 15213, USA.}
\author[]{Shreya Mandal}
\address{Ramakrishna Mission Vivekananda Educational and Research Institute, Belur, West Bengal, India.}
\author[]{Vinuge Rupasinghe}
\address{Department of Mathematics, University Of Colombo, Cumarathunga Munidasa Mawatha Colombo 00300, Sri Lanka.}
\author[]{Alexandra Seceleanu}
\address{Mathematics Department, University of Nebraska--Lincoln, Lincoln, NE, 68588, USA.}
\author[]{Duc Van Khanh Tran}
\address{Department of Mathematics, University of Texas at Austin, Austin, TX, 78712, USA.}
\author[]{Noah Walker}
\address{Department of Mathematics, Cedarville University, Cedarville, OH, 45314, USA.}
\keywords{Principal symmetric ideal, symmetric group, ideal powers, Hilbert function.}
\subjclass[2010]{Primary:  13A50, 13C13, Secondary:   13D40, 13F20. }
\begin{document}
\maketitle

\begin{abstract}
Principal symmetric ideals were recently introduced by Harada, Seceleanu, and \c{S}ega in \cite{HSS},  where their homological properties are elucidated. They are ideals generated by the orbit of a single polynomial under permutations of variables in a polynomial ring.  In this paper we determine when a product of two principal symmetric ideals is principal symmetric and when the powers of a principal symmetric ideal are again principal symmetric ideals.  We characterize the ideals that have the latter property as being generated by polynomials invariant up to a scalar multiple under permutation of variables.  Recognizing principal symmetric ideals is an open question for the purpose of which we produce certain obstructions. We also demonstrate that the Hilbert functions of  symmetric monomial ideals are not all given by symmetric monomial ideals, in contrast to the non-symmetric case.
\end{abstract}

\setcounter{tocdepth}{1} 
\tableofcontents

\section{Introduction}

Principal symmetric ideals, abbreviated in this paper as psi's, were introduced in \cite{HSS}. We recall this notion. Given an element $f$ of the polynomial ring $R=\kk[x_1,\ldots, x_d]$, the principal symmetric ideal $(f)_{S_d}$ is the ideal generated by the orbit of $f$ under the natural action of the symmetric group $S_d$ on $R$. In detail,  the definition states
\[
(f)_{S_d}=(f(x_{\sigma(1)}, \ldots, x_{\sigma(d)}) : \sigma\in S_d).
\]
Throughout this paper we make the assumption that the generator $f$ of a psi is a {\em homogeneous} polynomial. In this case the psi  $(f)_{S_d}$ is a homogeneous ideal and we may consider its invariants such as the minimal number of generators and the Hilbert function. 

In this paper we address a fundamental question.
\begin{quest}\label{q:recognition}
How can one determine whether a given ideal is a psi or not? 
\end{quest}

Whereas an ideal may be a psi despite not being written given in the form displayed above, this is a difficult question. An important ingredient in the remainder of our results consists of developing obstructions for an ideal to be a psi based on its number of generators and other characteristics. We investigate this paradigm  in more detail in \Cref{s:gens}. A highlight of our analysis  is  \Cref{negative order type analysis}, whose statement is too technical to state here.

Next we consider powers and products of psi's. Simple examples show that powers, and hence products of psi's need not be psi's; see \Cref{ex:k_sym_product}. We investigate the questions:

\begin{quest}\label{q:powers_products}
 When is a product of psi's a psi?
 When are the powers of a psi psi's?
\end{quest}

We settle the question regarding products completely in the case of monomial psi's and in the case of psi's in two dimensional polynomial rings in \Cref{product principal monomial} and \Cref{main product of psi}, respectively. This characterization is in terms of $\kk$-symetric polynomials, which are those only changed by a multiplication by a constant upon the action of any permutation of the variables. Regarding powers, we characterize psi's whose sufficiently large powers are also pis's as follows:

\begin{introthm*}[abstracted from \Cref{cor:powers monomial psi}, \Cref{thm:powers psi}, and \Cref{main power of psi}]
Let $I$ be a homogeneous principal symmetric ideal. Consider the statements
\begin{enumerate}
\item $I^n$ is a principal symmetric ideal for all $n\geq 1$
\item $I^n$ is a principal symmetric ideal for all sufficiently large integers $n$
\item $I^2$ is a principal symmetric ideal
\item $I$ is generated by a $\kk$-symmetric polynomial
\end{enumerate}
Statements (1), (2), and (4) are equivalent for all $I$. Statement (3) is equivalent to the rest when $I$ is a monomial ideal and when $I$ is an ideal of $\kk[x_1,x_2]$.
\end{introthm*}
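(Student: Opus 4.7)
The implications $(1) \Rightarrow (2)$ and $(1) \Rightarrow (3)$ are immediate. For $(4) \Rightarrow (1)$, if $\sigma(f) = c_\sigma f$ for each $\sigma \in S_d$ with $c_\sigma \in \kk$, then $I = (f)$ is principal, $I^n = (f^n)$, and $\sigma(f^n) = c_\sigma^n f^n$ exhibits $f^n$ as $\kk$-symmetric, so $I^n = (f^n)_{S_d}$. It therefore remains to prove $(2) \Rightarrow (4)$ in general, and $(3) \Rightarrow (4)$ when $I$ is monomial or when $d = 2$.

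For $(2) \Rightarrow (4)$, set $V = \Span_\kk\{\sigma(f) : \sigma \in S_d\}$ and $e = \deg f$. If $\dim_\kk V = 1$ then every $\sigma(f)$ is a scalar multiple of $f$ and $f$ is $\kk$-symmetric. Otherwise, pick $\kk$-linearly independent $v_1, v_2 \in V$. Since $\kk$ is algebraically closed inside $\kk(x_1, \ldots, x_d)$, the ratio $v_2/v_1$ is transcendental over $\kk$, and hence $v_1^n, v_1^{n-1}v_2, \ldots, v_2^n$ are $\kk$-linearly independent elements of $(I^n)_{en}$, yielding $\dim_\kk (I^n)_{en} \geq n+1$. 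On the other hand, if $I^n = (h)_{S_d}$, then $(I^n)_{en}$ is spanned by the $S_d$-orbit of $h$, of cardinality at most $d!$. These bounds conflict once $n \geq d!$, contradicting $(2)$.

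The case $(3) \Rightarrow (4)$ for $d = 2$ is the preceding argument specialized to $n = 2$: if $\dim V \geq 2$ then $\dim_\kk (I^2)_{2e} \geq 3 > 2!$, so $I^2$ cannot be a psi. For $(3) \Rightarrow (4)$ with $I = (x^\ba)_{S_d}$ monomial and $\ba$ non-constant, dimension-counting is too crude once $d \geq 3$, so I plan to invoke a representation-theoretic obstruction. The space $(I^2)_{2|\ba|}$ has a monomial $\kk$-basis indexed by the $S_d$-stable set $S = \{\sigma(\ba) + \tau(\ba) : \sigma, \tau \in S_d\}$, and $S$ decomposes into $S_d$-orbits $O_1 \sqcup \cdots \sqcup O_k$. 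Each summand $\Span_\kk\{x^\bc : \bc \in O_i\}$ is a coset permutation representation of $S_d$ with a one-dimensional invariant subspace (spanned by its orbit sum), so the trivial $S_d$-representation appears in $(I^2)_{2|\ba|}$ with multiplicity exactly $k$. If $I^2 = (h)_{S_d}$, then $(I^2)_{2|\ba|} = \kk[S_d] \cdot h$ is a cyclic $\kk[S_d]$-module, and in characteristic zero every irreducible $V_\lambda$ appears in a cyclic $\kk[S_d]$-module with multiplicity at most $\dim_\kk V_\lambda$; applied to the trivial representation this forces $k \leq 1$. To exhibit $k \geq 2$ and complete the contradiction, pick indices $i, j$ with $a_i \neq a_j$ and set $\sigma = (i\,j)$: the multiset of entries of $\ba + \sigma(\ba)$ is $\{2 a_\ell : \ell \neq i, j\} \cup \{a_i + a_j, a_i + a_j\}$, which differs from the multiset $\{2 a_\ell\}_\ell$ of $2\ba$ because $\{a_i + a_j, a_i + a_j\} \neq \{2 a_i, 2 a_j\}$ when $a_i \neq a_j$, so $2\ba$ and $\ba + \sigma(\ba)$ lie in distinct $S_d$-orbits.

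The main obstacle is the monomial case when $d \geq 3$: the total dimension of $(I^2)_{2|\ba|}$ can lie well within the bound $d!$, so the obstruction must be extracted from the $S_d$-module structure rather than from dimension alone. The key reformulation is that the multiplicity of the trivial representation in $(I^2)_{2|\ba|}$ counts the number of monomial $S_d$-orbits appearing in the generating degree, and cyclicity of $\kk[S_d] \cdot h$ caps this multiplicity at one; the remaining combinatorial step that $\ba$ non-constant forces at least two such orbits is a short multiset comparison.
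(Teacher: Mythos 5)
Your argument is correct in substance but takes a genuinely different route from the paper on both of the nontrivial implications, so it is worth comparing. For $(2)\Rightarrow(4)$ the paper first proves the general lower bound $\mu(IJ)\geq\mu(J)+1$ whenever the psi factor is not $\kk$-symmetric (\Cref{min num gens of product 1}, via an irreducible factor $p$ of $f$ not dividing every $\sigma\cdot f$), then iterates to get $\mu(I^n)\geq\mu(I)+n-1>d!$ and contradicts \Cref{min gen principal check}; your transcendence argument ($v_2/v_1\notin\kk$, hence $v_1^{n-i}v_2^i$ independent, so $\dim_\kk(I^n)_{en}\geq n+1>d!$) reaches the same contradiction more directly and is characteristic-free, though it only addresses powers, whereas the paper's lemma also drives the product results (\Cref{product principal monomial}, \Cref{main product of psi}); your $d=2$ case, the $n=2$ specialization, matches the paper's bound $3>2=|S_2|$. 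For the monomial case the paper argues combinatorially: $I^2$ is a monomial ideal, a monomial psi must be strongly homogeneous (\Cref{principal symmetric monomial are strongly homogeneous}), and the two minimal generators $\bx^{2\ba}$ and $\bx^{\ba+(i\,j)\cdot\ba}$ have different order types; your orbit count $k\geq 2$ is exactly this order-type comparison, but you convert it into a representation-theoretic obstruction (multiplicity of the trivial module in a cyclic $\kk[S_d]$-module is at most one).

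Two caveats. First, the semisimplicity step is a genuine restriction relative to the statement: the paper imposes no hypothesis on $\kk$ here, and its proof is characteristic-free, while your appeal to ``multiplicity of each irreducible in a cyclic module is at most its dimension'' (and to the trivial isotypic component of a transitive permutation module being one-dimensional as a summand) requires $\kk[S_d]$ semisimple, i.e.\ ${\rm char}\,\kk=0$ or ${\rm char}\,\kk>d$. The fix is easy and worth making: quotients of cyclic modules are cyclic, and the orbit-augmentation maps give a surjection of $\kk[S_d]$-modules from $(I^2)_{2|\ba|}=\bigoplus_{i}\Span_\kk\{\bx^\bc:\bc\in O_i\}$ onto $\kk^k$ with trivial action, which is cyclic only if $k\leq 1$; this works over any field and is, in essence, the paper's evaluation-at-${\bf 1}$ obstruction (\Cref{zero components}, \Cref{negative order type analysis}) in module-theoretic form. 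Second, you start the monomial case from $I=(\bx^\ba)_{S_d}$; since the theorem's hypothesis is that $I$ is a psi which is a monomial ideal, you should cite (or reprove) \Cref{principal monomial symmetric ideal} to justify that such an $I$ is indeed generated by the orbit of a single monomial, and note explicitly that $\ba$ constant gives $I=((x_1\cdots x_d)^t)$, which is generated by a $\kk$-symmetric polynomial, completing $(3)\Rightarrow(4)$.
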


F.S. Macaulay's classical work \cite{Macaulay} yields that for every homogeneous ideal $I$ there exists a monomial ideal having the same Hilbert function as $I$. We ask whether an analogous phenomenon takes place in the category of symmetric homogeneous ideals:

\begin{quest}\label{q:HF}
Given a symmetric ideal $I$, does there exist a symmetric monomial ideal with the same Hilbert function as $I$?
\end{quest}

In \Cref{thm:HF} we show that the answer to the question above is negative when $I$ ranges over a large class of psi's. Once again the key to this observation is an analysis of the number of generators of general psi's performed in \cite{HSS} versus that of monomial psi's in a large number of variables. We study monomial psi's in more detail in \Cref{s:monomial}, where we show the following useful characterization:

\begin{introthm*}[\Cref{principal monomial symmetric ideal}]
A principal symmetric ideal $I$  is a monomial ideal if and only if $I=(m)_{S_d}$ for some monomial $m$.
\end{introthm*}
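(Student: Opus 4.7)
The ``if'' direction is immediate, since the $S_d$-orbit of a monomial consists of monomials, so $(m)_{S_d}$ is generated by monomials. For the converse, suppose $I = (f)_{S_d}$ is a monomial ideal with $f$ homogeneous of degree $e$. The plan is to show that the minimal monomial generators of $I$ form a single $S_d$-orbit; any monomial in that orbit can then serve as the desired $m$.

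First I would note that all generators $\sigma \cdot f$ sit in degree $e$, so $I$ is generated in degree $e$, and hence the minimal monomial generators of $I$ are exactly the monomials of $I_e$. Because $I$ is $S_d$-invariant, this set partitions into $S_d$-orbits $O_1, \ldots, O_k$, and one obtains a $\kk[S_d]$-module decomposition $I_e = \bigoplus_{j=1}^k V_j$, where $V_j := \Span_\kk(O_j)$ is the transitive permutation module on $O_j$. The task reduces to proving $k = 1$.

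The key step is a representation-theoretic obstruction. Since $I_e = \Span_\kk\{\sigma \cdot f : \sigma \in S_d\}$, it is a cyclic $\kk[S_d]$-module. On each transitive permutation module $V_j$, the augmentation map $\epsilon_j \colon \sum_{m \in O_j} c_m m \mapsto \sum_m c_m$ is surjective and $S_d$-equivariant, since $S_d$ permutes $O_j$ without changing the sum of coefficients. Assembling the $\epsilon_j$ yields an $S_d$-equivariant surjection $I_e \twoheadrightarrow \kk^k$ onto a trivial $\kk[S_d]$-module. Since a quotient of a cyclic module is cyclic, $\kk^k$ under trivial action must be cyclic; but then $\kk[S_d] \cdot v = \kk \cdot v$ has dimension at most one, forcing $k \leq 1$. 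As $I \neq 0$, we conclude $k = 1$.

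The main obstacle I anticipate is identifying the right invariant to force $k = 1$. Once one recognizes that every transitive permutation module carries a canonical augmentation to the trivial representation, assembling these augmentations into a quotient of $I_e$ provides the obstruction to cyclicity whenever $k \geq 2$. A pleasant side feature is that the argument makes no use of semisimplicity of $\kk[S_d]$ and so works over an arbitrary field $\kk$.
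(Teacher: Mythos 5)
Your proof is correct, and it takes a genuinely different route from the paper. The paper proves the forward direction via \Cref{principal symmetric monomial are strongly homogeneous}: it decomposes the generator $f$ into strongly homogeneous components $g_1+\cdots+g_t$ of distinct order types, shows each $(g_i)_{S_d}=(R_{\ba_i})$ and hence $g_i({\bf 1})\neq 0$ (\Cref{contains all monomials of an order type}, \Cref{strongly homogeneous components generate monomial ideals}, \Cref{dim of strongly homogeneous ideal}), and then derives a contradiction from \Cref{negative order type analysis}, which itself rests on the bookkeeping of \Cref{zero components} and \Cref{order type split up}. You instead decompose the degree-$e$ piece $I_e$ of the \emph{ideal} into transitive permutation modules on monomial orbits (equivalently, the spaces $R_\ba$), observe that $I_e=\kk[S_d]\cdot f$ is cyclic, and kill the possibility of two or more orbits by mapping onto the trivial representation $\kk^k$ via the augmentations. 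The two arguments hinge on the same invariant --- your augmentation $\epsilon_j$ is exactly evaluation at ${\bf 1}$ restricted to a single order type, which is what drives \Cref{zero components} --- but your module-theoretic packaging is shorter and bypasses the order-type decomposition lemmas entirely. What the paper's heavier machinery buys is generality: \Cref{negative order type analysis} is an obstruction applicable to arbitrary (non-monomial) candidate psi's and is reused elsewhere (e.g.\ \Cref{ex:order types product}), whereas your argument exploits the fact that a symmetric monomial ideal's graded piece splits into full orbits. Both arguments are characteristic-free, as you note.
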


Our work opens up avenues of investigation into a series of related questions which we list in \Cref{s:conjectures}.

\section{Terminology}

Throughout the paper $\N$ denotes the set of natural numbers (including zero) and $R$ denotes the polynomial ring $\kk[x_1,\ldots, x_d]$. All facts stated in this paper are true for $d=1$, but to avoid trivialities we assume henceforth that the number of variables is $d\geq 2$. The symmetric group $S_d$ acts on $R$ by means of setting for $\sigma\in S_d$ and $f\in R$
\[
\sigma\cdot f(x_1,\ldots, x_n)=f(x_{\sigma(1)},\ldots, x_{\sigma(n)}).
\]

Working with the symmetric group $S_d$ leads naturally to partitions with $d$ parts.

\begin{defn}
A {\em partition} is a tuple $\ba = (a_1, \cdots, a_d)\in\N^d$   such that $a_1 \geq \cdots \geq a_d$. Its number of parts is $d$ and the sum of its parts is $|\ba|=a_1+\cdots+a_d$. 

We denote by $P(n)$ the number of partitions of $n$ into arbitrary number of parts, that is,
\[
P(n)=\#\{\ba : |\ba|=n\}.
\]
\end{defn}

 Partitions are in bijection with equivalence classes of monomials under the action of the symmetric group.

\begin{defn}\label{def: Ra}
    Let $\ba = (a_1, \cdots, a_d)$ be a partition with $d$ parts. A monomial $m \in \kk[x_1, \cdots, x_d]$ is said to be of {\em order type} $\ba$ if 
    \[m = x_{1}^{a_{\sigma(1)}}\cdots x_{d}^{a_{\sigma(d)}} \text{ for some } \sigma \in S_d.
    \]
    We denote by $R_\ba$ the $\kk$-linear span of all monomials of order type $\ba$ in $R=\kk[x_1, \cdots, x_d]$ and by $(R_\ba)$ the ideal generated by this set.
\end{defn}

 Let $k$ denote the number of distinct values in the tuple $\ba$ and let $\{n_i\}_{i=1}^k$ be the number of occurrences of each value in $\ba$. Then we have:
    \begin{equation}\label{eq:dim Ra}
    \dim_\kk R_\ba = \frac{d!}{n_1!n_2!\cdots n_k!}.
    \end{equation}

We single out a class of polynomials which generate psi's having well-controlled generators. %We will see in \cref{s:reps} that the span of the minimal generators of a psi generated by such a polynomials is an irreducible representation.

\begin{defn}
    Let $f$ be a homogeneous polynomial.
    We say that $f$ is {\em strongly homogeneous} if all monomials in $f$ have the same order type.
    If $f$ is strongly homogeneous, then the order type of $f$ is the order type of a monomial in $f$. 
\end{defn}
Note that the property of strong homogeneity and the order type of a polynomial are preserved under permutation of variables.

\begin{defn}\label{def:str homogeneous}
    An ideal $I$ is {\em strongly homogeneous} if $I$ is generated by strongly homogeneous polynomials $f_1,\ldots, f_n$ such that all $f_i$'s have the same order type.
      A {\em strongly homogeneous psi} is a principal symmetric ideal generated by a strongly homogeneous polynomial. 
\end{defn}

For example, principal symmetric ideals generated by a monomial are strongly homogeneous psi's. Note that a  strongly homogeneous psi is also a strongly homogeneous ideal in the sense of \Cref{def:str homogeneous}. Whether a homogeneous ideal is strongly homogeneous can be checked using any minimal generating set.

\begin{defn}
If $I$ is a homogeneous ideal of a graded $\kk$-algebra, it admits a decomposition $I=\bigoplus_{n\geq 0} I_n$, where $I_n$ is the $\kk$-vector space of elements of degree $n$ in $I$. 
The {\em Hilbert function} of $I$ is the function
$
 H_I:\N\to \N, \, H_I(n)=\dim_\kk I_n.
$
\end{defn}

\section{Recognizing principal symmetric ideals}\label{s:gens}

In this section we are concerned with the problem of identifying whether a given ideal is a psi. This is highly non trivial as the ideal may have a different set of generators than the one expected in the definition of a psi. Thus the difficulty of the problem consists in the non-uniqueness of minimal generating sets for homogeneous ideals.

In this section we focus on obstructions preventing a homogeneous ideal from being a psi. For a homogeneous ideal $I$ we denote by $\mu(I)$ its minimal number of generators. Since a psi $I$ has all its generators in a single degree, say degree $n$, one can compute the minimal number of generators of $I$ as $\mu(I)=\dim_\kk(I_n)$. The number of generators, when too large, can be construed as an obstruction to being a psi by means of the following observation.

\begin{lem}\label{min gen principal check}
    Let $I$ is homogeneous psi of $\kk[x_1,\ldots, x_d]$, then $\mu(I) \le d!$.
\end{lem}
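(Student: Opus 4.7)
The plan is to extract the bound directly from the definition of a psi. By hypothesis $I=(f)_{S_d}$ for some homogeneous $f$, and by the definition of the orbit ideal the set $G=\{\sigma\cdot f : \sigma\in S_d\}$ is a generating set for $I$. Since $|S_d|=d!$, we have $|G|\le d!$, so $I$ admits a generating set of cardinality at most $d!$.

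Next I would translate this generating set bound into the desired bound on $\mu(I)$. Since $f$ is homogeneous of some degree $n$, every element of $G$ lies in the single graded component $I_n$, so $G$ is in fact a $\kk$-spanning set of $I_n$. As noted in the paragraph preceding the lemma, a psi is generated in a single degree, so $\mu(I)=\dim_\kk I_n$. Combining these observations gives
\[
\mu(I)=\dim_\kk I_n \le |G| \le d!,
\]
which is the desired inequality.

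There is essentially no obstacle here — the bound is immediate once one recognizes that the orbit already provides an explicit generating set of size at most $d!$ in a single degree. The only point worth flagging is that the orbit can have strictly fewer than $d!$ elements (when $f$ has a nontrivial stabilizer in $S_d$), which only improves the inequality and therefore does not affect the argument.
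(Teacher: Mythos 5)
Your argument is correct and is essentially the paper's proof: the orbit $\{\sigma\cdot f : \sigma\in S_d\}$ is a generating set of size at most $d!=|S_d|$, which immediately bounds $\mu(I)$. Your extra remarks about the single degree and the possibility of a nontrivial stabilizer are fine but not needed, since any (not necessarily minimal) generating set already bounds the minimal number of generators.
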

\begin{proof}
By definition, a set of (not necessarily minimal) generators of $I$ is in bijection with the group $S_d$.
\end{proof}

For a polynomial $f$ we denote by $f({\bf 1})$ the element of $\kk$ obtained by evaluating $f$ at $x_i=1$ for $1\leq i\leq d$. Note that every element in the $S_d$ orbit of $f$ has the same value when evaluated at ${\bf 1}$. More can be said regarding obstructions to being a psi in terms of this invariant. 

\begin{lem}\label{dim of strongly homogeneous ideal}
Let $f$ be a strongly homogeneous polynomial in $\kk[x_1,\ldots, x_d]$ with order type $\ba$. If $f({\bf 1})= 0$ then $(f)_{S_d}\subsetneq (R_\ba)$. In particular $\mu ((f)_{S_d})\leq \dim_\kk R_\ba-1$.
\end{lem}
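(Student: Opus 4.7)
The plan is to begin by establishing the containment $(f)_{S_d} \subseteq (R_\ba)$ using only the strong homogeneity assumption. Since the order type of a polynomial is preserved under permutation of variables, for every $\sigma \in S_d$ the conjugate $\sigma \cdot f$ is again strongly homogeneous of the same order type $\ba$, hence $\sigma \cdot f \in R_\ba$. Thus every generator of $(f)_{S_d}$ lies in $R_\ba$, so $(f)_{S_d} \subseteq (R_\ba)$.

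Next I would leverage the evaluation map $\mathrm{ev}_{{\bf 1}} : R_\ba \to \kk$ given by $g \mapsto g({\bf 1})$, where ${\bf 1}=(1,\ldots,1)$. Since $R_\ba$ is spanned by monomials, each of which evaluates to $1$, this $\kk$-linear functional is surjective, and therefore its kernel $K$ has dimension $\dim_\kk R_\ba - 1$. The key observation is that evaluation at ${\bf 1}$ is $S_d$-invariant: for every $\sigma \in S_d$ one has $(\sigma \cdot f)({\bf 1}) = f({\bf 1}) = 0$. Hence the entire orbit of $f$ lies inside $K$, and so does its $\kk$-span, which is precisely the degree-$|\ba|$ component of the ideal $(f)_{S_d}$.

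Because the degree-$|\ba|$ component of $(R_\ba)$ is all of $R_\ba$, and since we have shown $((f)_{S_d})_{|\ba|} \subseteq K \subsetneq R_\ba$, the two ideals already differ in degree $|\ba|$, which yields $(f)_{S_d} \subsetneq (R_\ba)$. For the bound on the number of generators, I would invoke the remark immediately preceding \Cref{min gen principal check}: a psi has all its generators in one degree, so $\mu((f)_{S_d}) = \dim_\kk ((f)_{S_d})_{|\ba|}$. Combining with $((f)_{S_d})_{|\ba|} \subseteq K$ gives $\mu((f)_{S_d}) \leq \dim_\kk K = \dim_\kk R_\ba - 1$.

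There is no serious obstacle here; the whole argument reduces to a single linear-algebraic observation. The only point requiring any care is recognizing that the hypothesis $f({\bf 1})=0$ is really a codimension-one constraint on $R_\ba$ that, thanks to $S_d$-invariance of evaluation at ${\bf 1}$, is inherited by every polynomial in the $S_d$-orbit of $f$ and hence by the entire $\kk$-span generating $(f)_{S_d}$ in its generating degree.
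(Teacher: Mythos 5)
Your proposal is correct and follows essentially the same route as the paper: both arguments observe that the orbit of $f$, and hence the entire degree-$|\ba|$ component of $(f)_{S_d}$, vanishes at ${\bf 1}$, whereas $R_\ba$ contains monomials that do not, giving the strict containment and the dimension bound. Your phrasing via the kernel of the evaluation functional is a slightly more explicit packaging of the same linear-algebraic fact.
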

\begin{proof}
The containment $(f)_{S_d}\subsetneq (R_\ba)$ follows because $f\in R_\ba$ implies  $\sigma\cdot f\in R_\ba$ for any $\sigma\in S_d$. The condition $f({\bf 1})=0$ implies $(\sigma\cdot f)({\bf 1})=0$ for all $\sigma\in S_d$ and moreover $g({\bf 1})=0$ for any polynomial $g$ in the degree $|\ba|$ component of $I$. Therefore there are elements of $R_\ba$ not in $I$, for example, no monomials  of degree $\ba$ are in  $I$. This gives the strict containment and hence the claimed inequality.
\end{proof}

The following technical lemmas are the main ingredients in the proof of \Cref{negative order type analysis} as well as \Cref{principal symmetric monomial are strongly homogeneous}.

\begin{lem}\label{zero components}
    Let $f$ be a homogeneous polynomial and write $f=g_1+\cdots+g_r$, where the monomials in each of the $g_i$'s share no order types. Suppose there exists a homogeneous polynomial $g\in (f)_{S_d}$  such that $\deg(g)=\deg(f)$, $g$ shares order types only with $g_1$  among all $g_i$'s, and $g({\bf 1})\not=0$. Then for all $i\not=1$,  $g_i({\bf 1})=0$. Thus, $f({\bf 1})=g_1({\bf 1})$.
\end{lem}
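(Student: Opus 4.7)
The plan is to use the hypothesis $g \in (f)_{S_d}$ with $\deg(g) = \deg(f)$ to express $g$ as a degree-preserving $\kk$-linear combination of the orbit of $f$, then track the order-type components of that combination separately, and finally exploit that evaluation at $\mathbf{1}$ is $S_d$-invariant.

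First I would observe that any element of $(f)_{S_d}$ of degree equal to $\deg(f)$ must be a $\kk$-linear combination of the orbit generators $\{\sigma \cdot f : \sigma \in S_d\}$, because multiplying any such generator by a non-constant polynomial produces terms of strictly larger degree. So there exist scalars $c_\sigma \in \kk$ with $g = \sum_{\sigma \in S_d} c_\sigma (\sigma \cdot f)$.

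Next I would exploit that the $S_d$-action preserves order types: permuting the variables in a monomial of order type $\bc$ again produces a monomial of order type $\bc$. Setting $h_i := \sum_\sigma c_\sigma (\sigma \cdot g_i)$ for each $i$, each $h_i$ is supported on the same set of order types as $g_i$, and by hypothesis these supports are pairwise disjoint. Distributing gives $g = h_1 + h_2 + \cdots + h_r$. Since $g$ shares no order types with $g_2, \ldots, g_r$, disjointness of supports forces $h_i = 0$ for every $i \neq 1$, and $g = h_1$.

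Finally I would evaluate at $\mathbf{1}$. Because $\sigma$ permutes the entries of $\mathbf{1}$ trivially, $(\sigma \cdot g_i)(\mathbf{1}) = g_i(\mathbf{1})$, so setting $C := \sum_\sigma c_\sigma$ one has $h_i(\mathbf{1}) = C \cdot g_i(\mathbf{1})$. From $h_1(\mathbf{1}) = g(\mathbf{1}) \neq 0$ it follows that $C \neq 0$, and then $h_i = 0$ for $i \neq 1$ forces $g_i(\mathbf{1}) = 0$. Summing gives $f(\mathbf{1}) = g_1(\mathbf{1})$. There is no real obstacle here: the decisive observation is that degree equality collapses the polynomial combination to a $\kk$-linear one, after which the rest is bookkeeping using the $S_d$-equivariance of both the order-type decomposition and the evaluation map.
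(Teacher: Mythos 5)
Your proposal is correct and matches the paper's own argument: both reduce $g$ to a $\kk$-linear combination $\sum_\sigma c_\sigma(\sigma\cdot f)$ via the degree equality, split along order types to conclude $\sum_\sigma c_\sigma(\sigma\cdot g_i)=0$ for $i\neq 1$, and evaluate at ${\bf 1}$ to get $\sum_\sigma c_\sigma\neq 0$ and hence $g_i({\bf 1})=0$. The only difference is cosmetic: you spell out the justification (which the paper leaves implicit) that degree equality forces the polynomial coefficients to be scalars.
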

\begin{proof}
    Suppose that $g\in (f)_{S_d}$ with $g({\bf 1})\not=0$.
    Since $\deg(g)=\deg(f)$, there exist constants $c_\sigma\in \kk$ such that 
    $$g=\sum_{\sigma\in S_d}c_\s\s\cdot(g_1+\cdots+g_r)=\sum_{\s\in S_d}c_\s\s\cdot g_1+\cdots+\sum_{\s\in S_d}c_\s\s\cdot g_r.$$
    Since $g$ shares order types only with $g_1$ and the $g_i$'s share no order types, we conclude from the identity displayed above that $g=\sum_{\s\in S_d}c_\s\s\cdot g_1$ and $\sum_{\s\in S_d}c_\s\s\cdot g_i=0$ for $i>1$.
    Hence,
    \begin{align*}
        g({\bf 1}) &= \sum_{\s\in S_d}c_\s(\s\cdot g_1)({\bf 1}) \\
                     &= \sum_{\s\in S_d}c_\s g_1({\bf 1}) \\
                     &= g_1({\bf 1})\sum_{\s\in S_d}c_\s.
    \end{align*}
    Since $g({\bf 1})\not=0$, it follows that $\sum\limits_{\s\in S_d}c_\s \neq 0$.
   For $i>1$ we deduce from $\sum_{\s\in S_d}c_\s\s\cdot g_i=0$ that
    \begin{align*}
        0 &= \sum_{\s\in S_d}c_\s(\s\cdot g_i)({\bf 1}) \\
          &= \sum_{\s\in S_d}c_\s g_i({\bf 1}) \\
          &= g_i({\bf 1})\sum_{\s\in S_d}c_\s 
         \end{align*}
         Since $\sum\limits_{\s\in S_d}c_\s \neq 0$, this yields $ g_i({\bf 1})=0$ for $i>1$, as desired, and thus $f({\bf 1})=g_1({\bf 1})$.
\end{proof}

\begin{lem}\label{order type split up}
    Let $f$ be homogeneous and $g\in (f)_{S_d}$ be a homogeneous polynomial such that $\deg(g)=\deg(f)$.
    Then $f=g'+h$ for some $g'$ which contains precisely the same order types as $g$ and for some $h$ which shares no order types with $g$.
    Also, if $g({\bf 1})\not=0$, then $g'({\bf 1})\not=0$.
\end{lem}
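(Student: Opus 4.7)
The plan is to split $f$ by the order types of its monomials and keep the parts whose order type appears in $g$. Concretely, decompose $f = \sum_{\ba} f_{\ba}$ with $f_{\ba} \in R_{\ba}$, let $T$ denote the set of order types appearing in $g$, and define $g' := \sum_{\ba \in T} f_{\ba}$ and $h := f - g' = \sum_{\ba \notin T} f_{\ba}$. By construction $g'$ and $h$ share no order types, so the only nontrivial part of the first claim is to check that the set of order types of $g'$ equals $T$ exactly, rather than just being contained in $T$.

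To establish this, use that $g \in (f)_{S_d}$ together with $\deg(g) = \deg(f)$ forces $g = \sum_{\sigma \in S_d} c_\sigma\, \sigma\cdot f$ for some scalars $c_\sigma \in \kk$. Since permutation of variables preserves order types of monomials, every monomial of $\sigma\cdot f$ has an order type already occurring in $f$; hence every $\ba \in T$ satisfies $f_{\ba} \neq 0$, which is what is needed.

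For the second assertion, apply each $\sigma$ to the splitting $f = g' + h$ and take the same linear combination as above:
\[
g = \sum_{\sigma} c_\sigma\,\sigma\cdot g' + \sum_{\sigma} c_\sigma\,\sigma\cdot h.
\]
The two partial sums have disjoint sets of order types (contained in $T$ and in its complement, respectively), whereas every order type of $g$ lies in $T$. The uniqueness of the decomposition of a homogeneous polynomial by order type therefore forces $g = \sum_\sigma c_\sigma\,\sigma\cdot g'$ (and the second sum to vanish). Evaluating at $\mathbf{1}$ and using $(\sigma\cdot g')(\mathbf{1}) = g'(\mathbf{1})$ gives $g(\mathbf{1}) = g'(\mathbf{1}) \sum_\sigma c_\sigma$, so $g(\mathbf{1}) \neq 0$ implies $g'(\mathbf{1}) \neq 0$. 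The main technical point is the uniqueness of the order-type decomposition; after that, the evaluation step is essentially the one already carried out in \Cref{zero components}.
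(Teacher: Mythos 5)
Your proposal is correct and follows essentially the same route as the paper: define $g'$ as the sum of the terms of $f$ whose order type occurs in $g$, use $g=\sum_\sigma c_\sigma\,\sigma\cdot f$ together with permutation-invariance of order types to see that every order type of $g$ actually occurs in $f$, and then split the linear combination by order type and evaluate at $\mathbf{1}$ to get $g(\mathbf{1})=g'(\mathbf{1})\sum_\sigma c_\sigma$. The explicit decomposition $f=\sum_{\ba}f_{\ba}$ is just a notational restatement of the paper's construction, so there is nothing further to add.
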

\begin{proof}
    Since $g\in (f)_{S_d}$ and $\deg(g)=\deg(f)$, there exist $c_\s\in K$ such that
    $$g=\sum_{\s\in S_d}c_\s\s\cdot f.$$
    Since order types are invariant under permutation, $f$ must contain a nonzero term for every order type present in $g$.
    Let $g'$ contain precisely those terms of $f$ whose order type is present in $g$.
    Let $h=f-g'$, so that $f=g'+h$ and $h$ shares no order types with $g$. 
  
    Now, suppose that $g({\bf 1})\not=0$.
    Note that
    $$g=\sum_{\s\in S_d}c_\s\s\cdot (g'+h)=\sum_{\s\in S_d}c_\s\s\cdot g'+\sum_{\s\in S_d}c_\s\s\cdot h.$$
    Since $h$ shares no order types with $g$, $\sum_{\s\in S_d}c_\s\s\cdot g'=g$ and $\sum_{\s\in S_d}c_\s\s\cdot h=0$.
    Thus,
    $$g({\bf 1})=\sum_{\s\in S_d}c_\s(\s\cdot g')({\bf 1})=g'({\bf 1})\sum_{\s\in S_d}c_\s,$$
    so $g'({\bf 1})\not=0$.
\end{proof}

We illustrate the previous lemma with an example.

\begin{ex}
Let $f=x_{1}^{3}-x_{2}^{3}+x_{1}^{2}x_{3}+x_{2}x_{3}x_{4}\in \Q[x_1,x_2,x_3,x_4]$. One can verify, for example by using Macaulay2 \cite{M2}, that $g=x_1x_2^2+x_1x_3x_4 \in (f)_{S_4}$. Among the terms of $f$, those which share order types with $g$ are $x_{1}^{2}x_{3}$ and $x_{2}x_{3}x_{4}$, while $x_{1}^{3}$ and $-x_{2}^{3}$ do not. So the proof of \Cref{order type split up} tells us to set $g'=x_{1}^{2}x_{3}+x_{2}x_{3}x_{4}$ and $h'=x_{1}^{3}-x_{2}^{3}$. Observe that $g({\bf 1})\not=0$ and also  $g'({\bf 1})\not=0$.
\end{ex}

\begin{thm}\label{negative order type analysis}
    Let $I$ be a homogeneous ideal generated in degree $n$.
    Let $g_1,g_2$ be homogeneous polynomials of degree $n$ that share no order types such that $g_1({\bf 1})\not=0$ and $g_2({\bf 1})\not=0$.
    If $g_1,g_2\in I$ then $I$ is not a principal symmetric ideal.
\end{thm}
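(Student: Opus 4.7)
The plan is to argue by contradiction: assume that $I = (f)_{S_d}$ for some homogeneous polynomial $f$ of degree $n$, and derive a contradiction from the existence of $g_1, g_2 \in I$ with the stated properties. The main tools will be \Cref{order type split up} and \Cref{zero components}, which together control how the value at $\mathbf{1}$ is distributed among pieces of $f$ indexed by order types.

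First, I would apply \Cref{order type split up} to $f$ together with $g_1$: this yields a decomposition $f = g_1' + h_1$ where $g_1'$ collects exactly the monomials of $f$ whose order types appear in $g_1$, and $h_1$ shares no order types with $g_1$; moreover $g_1'(\mathbf{1}) \neq 0$ because $g_1(\mathbf{1}) \neq 0$. Applying the same lemma to $f$ with $g_2$ gives $f = g_2' + h_2$ and $g_2'(\mathbf{1}) \neq 0$. Since $g_1$ and $g_2$ share no order types by hypothesis, the order types appearing in $g_1'$ are disjoint from those appearing in $g_2'$; consequently $g_2'$ is actually a summand of $h_1$ (and vice versa), so that $f$ can be refined into a three-term decomposition
\[
f = g_1' + g_2' + h,
\]
where the three summands pairwise share no order types, and $h$ consists of the remaining terms of $f$.

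Now I would invoke \Cref{zero components} with this decomposition and with the element $g = g_1 \in (f)_{S_d} = I$: since $g_1$ shares order types only with $g_1'$ among the three summands, and $g_1(\mathbf{1}) \neq 0$, the lemma forces $g_2'(\mathbf{1}) = 0$ (and also $h(\mathbf{1}) = 0$). But this directly contradicts the conclusion $g_2'(\mathbf{1}) \neq 0$ obtained in the previous step. Hence no such $f$ can exist, and $I$ is not a principal symmetric ideal.

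The argument is essentially a bookkeeping exercise once the two technical lemmas are in place, so I do not anticipate a genuine obstacle; the only point that requires care is verifying that the three-term decomposition $f = g_1' + g_2' + h$ indeed consists of summands with pairwise disjoint order types, which is where the hypothesis that $g_1$ and $g_2$ share no order types is crucial. Without that hypothesis, $g_1'$ and $g_2'$ could overlap and \Cref{zero components} would not apply cleanly.
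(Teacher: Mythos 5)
Your proposal is correct and follows essentially the same route as the paper's proof: the same two applications of \Cref{order type split up} to produce $f=g_1'+h_1=g_2'+h_2$ with $g_1'({\bf 1}),g_2'({\bf 1})\neq 0$, the same refinement to a three-term decomposition $f=g_1'+g_2'+h$ with pairwise disjoint order types, and the same final application of \Cref{zero components} with $g=g_1$ to force $g_2'({\bf 1})=0$, a contradiction. No gaps.
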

\begin{proof}
    Assume to the contrary that $I$ is a principal symmetric ideal generated by some $f$.
    Then by \Cref{order type split up},  $f=g_1'+h_1$ and $f=g_2'+h_2$ where $g_i'$ and $h_i$ share no order types and moreover $g_i'$ contains precisely the same order types as $g_i$.
    Thus, the identity $g_1'+h_1=g_2'+h_2$ combined with the fact that $g_1, g_2$ share no order types shows that the terms of $g_1'$ are a subset of the terms of $h_2$ and the terms of $g_2'$ are a subset of the terms of $h_1$. 
   
       Let $h'=h_2-g_1'$ and note that $h'$ shares no order types with $g_1',g_2'$. Indeed, $h'$   shares no order types with $g_1'$ since all terms of $g_1'$ cancel in $h_2-g_1'$ and these are exactly the terms of $h_2$ of the same order type with terms in $g_1'$.   Moreover, $h'$   shares no order types with $g_2'$ since the terms of $h'$ are a subset of the terms of $h_2$ and $h_2$ shares no order types with $g_2'$.
       
        Then, $f=g_1'+g_2'+h'$. 
Since $g_1({\bf 1}),g_2({\bf 1})\not=0$, we may deduce $g_1'({\bf 1}),g_2'({\bf 1})\not=0$ by \Cref{order type split up}.
We now apply  \Cref{zero components} for  $g=g_1\in(f)_{S_d}$ based on the decomposition $f=g_1'+g_2'+h'$. To verify the hypothesis of this lemma, note that  $g_1$ shares order types only with $g_1'$, but not with $g_2'$ or $h'$. This is true since the order types of $g_2'$ are precisely the order types of $g_2$ and $g_1$ shares no order types with $g_2$, while $h'$ shares no order types with $g_1'$, as established above, but $g_1'$ has exactly the same order types as $g_1$. In view of this,   \Cref{zero components} yields that $g_2'({\bf 1})=0$ and $h'({\bf 1})=0$. The fomer identity provides the desired contradiction.    
\end{proof}

 \section{Monomial principal symmetric  ideals}\label{s:monomial}

As a matter of terminology, a {\em monomial  principal symmetric ideal} may be interpreted to be a psi that is also a monomial ideal or a psi generated by a monomial, that is, an ideal of the form $(m)_{S_d}$, where $m$ is a monomial. We prove in \Cref{principal monomial symmetric ideal} that these two potential interpretations describe the same class of ideals.

\begin{lem}\label{contains all monomials of an order type}
 Assume $(f)_{S_d}$ is a monomial ideal.
    If $f$ contains a term of order type $\ba$, then 
    $R_\ba\subseteq (f)_{S_d}$.
 If, in addition, $f$ is strongly homogeneous, then   $(R_\ba)=( f)_{S_d}$.
\end{lem}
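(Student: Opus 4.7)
The plan is to exploit the defining feature of monomial ideals: a polynomial lies in a monomial ideal if and only if each of its monomial terms does. Equivalently, a monomial ideal is spanned as a $\kk$-vector space by the monomials it contains. I would open by recording this fact, since it is the engine driving both halves of the proof.

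For the first claim, I would apply this principle to $f$ itself. Because $f \in (f)_{S_d}$ and $(f)_{S_d}$ is assumed to be a monomial ideal, every monomial term of $f$ lies in $(f)_{S_d}$. In particular, if $f$ has a term of order type $\ba$, then the underlying monomial $m$ of that term (which equals the term up to a nonzero scalar) belongs to $(f)_{S_d}$. Next I would use the $S_d$-stability of $(f)_{S_d}$: for every $\sigma \in S_d$, the permuted monomial $\sigma \cdot m$ lies in the ideal as well. Since every monomial of order type $\ba$ arises as $\sigma \cdot m$ for some $\sigma \in S_d$ by the definition of order type, all monomials of order type $\ba$ lie in $(f)_{S_d}$, and hence so does their $\kk$-linear span $R_\ba$.

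For the second claim, assume in addition that $f$ is strongly homogeneous of order type $\ba$. By definition this means every monomial of $f$ has order type $\ba$, so $f \in R_\ba \subseteq (R_\ba)$. The ideal $(R_\ba)$ is clearly $S_d$-invariant (because $R_\ba$ is $S_d$-invariant as a set of monomials), so $\sigma \cdot f \in (R_\ba)$ for every $\sigma \in S_d$, giving $(f)_{S_d} \subseteq (R_\ba)$. Combined with the reverse containment $(R_\ba) \subseteq (f)_{S_d}$ that follows from the first part, we obtain the desired equality $(R_\ba) = (f)_{S_d}$.

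The proof is essentially a bookkeeping argument; I do not anticipate a real obstacle. The only subtlety worth flagging carefully is the very first step, namely that when $I$ is a monomial ideal and $p \in I$ is any polynomial, each monomial appearing in $p$ (with nonzero coefficient) already lies in $I$. It may be worth stating this as a one-line remark before invoking it on $f$, so that the reader is not confused about why a single term of a polynomial belongs to the ideal on its own.
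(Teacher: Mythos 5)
Your proposal is correct and follows essentially the same route as the paper's proof: deduce $m\in(f)_{S_d}$ from the monomial-ideal hypothesis, use $S_d$-stability to sweep out all of $R_\ba$, and for the second claim observe $f\in R_\ba$ so that $(f)_{S_d}\subseteq(R_\ba)$. No issues.
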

\begin{proof}
    Suppose that $f$ contains a term which is a scalar multiple of a monomial $m$ of order type $\ba$ and let $I=(f)_{S_d}$. Since $f\in I$ and $I$ is assumed to be a monomial ideal, we deduce $m\in I$.  Since the ideal $I$ is symmetric, $(m)_{S_d}\subseteq (f)_{S_d}$. Since the monomials of order type $\ba$ are a basis for $R_\ba$ and any monomial of order type $\ba$ can be expressed as a permutation of $m$, we conclude that $(R_\ba)=(m)_{S_d}\subseteq (f)_{S_d}$.
    
  If  $f$ is strongly homogeneous order type $\ba$, then all terms of $f$ are in $R_\ba$, therefore $f\in R_\ba$ and $(f)_{S_d}\subseteq (R_\ba)$.
\end{proof}

% This lemma fails when f it not homogeneous, so the strongly homogeneous theorem doesn't go through for non homogeneous f.
\begin{lem}\label{strongly homogeneous components generate monomial ideals}
    Let $(f)_{S_d}$ be a monomial ideal, where $f$ is homogeneous. 
    Decompose $f$ into a sum of  strongly homogeneous components of pairwise distinct order types, $f=g_1+\cdots+g_t$.
    Then, for each $i$, $(g_i)_{S_d}$ is a monomial ideal and $(g_i)_{S_d}=(R_\ba)$ for some vector $\ba\in\N^d$ that depends on $i$.
\end{lem}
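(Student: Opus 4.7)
The plan is to fix an index $i$, let $\ba_i$ denote the order type of $g_i$, and prove that $(g_i)_{S_d} = (R_{\ba_i})$; this immediately yields both conclusions of the lemma, since $(R_{\ba_i})$ is by definition a monomial ideal.

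The containment $(g_i)_{S_d} \subseteq (R_{\ba_i})$ will be immediate: strong homogeneity and order type are preserved under permutation of variables, so every generator $\sigma \cdot g_i$ lies in $R_{\ba_i}$. For the reverse containment, I would fix an arbitrary monomial $m$ of order type $\ba_i$ and exhibit it as an element of $(g_i)_{S_d}$. Since $g_i$ is a summand of $f$, the polynomial $f$ has a term of order type $\ba_i$, and \Cref{contains all monomials of an order type} applied to the monomial ideal $(f)_{S_d}$ gives $m \in (f)_{S_d}$. Because $\deg(m) = |\ba_i| = \deg(f)$, this membership reduces to a scalar expansion, yielding $c_\sigma \in \kk$ with
\[
m = \sum_{\sigma \in S_d} c_\sigma \,\sigma \cdot f = \sum_{j=1}^t \sum_{\sigma \in S_d} c_\sigma \,\sigma \cdot g_j.
\]

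The heart of the argument is the order-type separation at this stage. Since monomials of distinct order types are distinct, the subspaces $R_{\ba_1}, \ldots, R_{\ba_t}$ are $\kk$-linearly independent, and each inner sum $\sum_{\sigma} c_\sigma\, \sigma \cdot g_j$ lies in $R_{\ba_j}$. Comparing the order-type components of the identity above, and using that $m \in R_{\ba_i}$ while the $\ba_j$'s are pairwise distinct, forces $m = \sum_{\sigma} c_\sigma\, \sigma \cdot g_i$ with all other components vanishing. This realizes $m$ as a $\kk$-linear combination of generators of $(g_i)_{S_d}$, giving $R_{\ba_i} \subseteq (g_i)_{S_d}$ and hence equality.

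The main obstacle is precisely the separation step: it is the only place where both the strong homogeneity of the components $g_j$ and the hypothesis of pairwise distinct order types come into play, and everything else is routine bookkeeping once \Cref{contains all monomials of an order type} is invoked to bring $m$ into $(f)_{S_d}$.
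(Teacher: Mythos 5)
Your proof is correct and follows essentially the same route as the paper: both invoke \Cref{contains all monomials of an order type} to place every monomial of order type $\ba_i$ inside $(f)_{S_d}$, then use strong homogeneity together with the pairwise distinctness of the order types to separate components in the generating degree and conclude $(g_i)_{S_d}=(R_{\ba_i})$. Your scalar-expansion of $m$ over the generators $\sigma\cdot f$ simply makes explicit the separation step that the paper states more tersely.
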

\begin{proof}
    Let $\ba$ be the order type of $g_i$.
    By \Cref{contains all monomials of an order type}, $$R_\ba\subseteq (f)_{S_d}=( g_1+\cdots+ g_t)_{S_d}\subseteq( g_1)_{S_d}+\cdots+(g_t)_{S_d}.$$
    Since $g_i$ has a different order type than the other $g_j$'s, $R_\ba\subseteq(g_i)_{S_d}$.
    Also, the minimal generators of $(g_i)_{S_d}$ form a subset of $R_\ba$, so $( g_i)_{S_d}=(R_\ba)$.    Thus $(g_i)_{S_d}$ is a monomial ideal.
\end{proof}

%\begin{lem}\label{monomial generates all}
%    Let $(g)_{S_d}$ be a monomial ideal.
%    If $g$ is strongly homogeneous, then $g({\bf 1})\not=0$.
%\end{lem}
%\begin{proof}
%    Suppose that $g$ is strongly homogeneous and let $\ba$ be its order type.
%    By Lemma \ref{contains all monomials of an order type}, $R_\ba= (g)_{S_d}$.
%    Hence, by  \Cref{dim of strongly homogeneous ideal}, $g({\bf 1})\not=0$.
%\end{proof}

\begin{thm}\label{principal symmetric monomial are strongly homogeneous}
    Assume $(f)_{S_d}$ is a monomial ideal.
    If $f$ is homogeneous, then $f$ is strongly homogeneous and $(f)_{S_d}=(m)_{S_d}$ for some monomial $m$.
\end{thm}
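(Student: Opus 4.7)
The plan is to decompose $f$ into its strongly homogeneous components and argue there can be only one. Write $f=g_1+\cdots+g_t$ as a sum of strongly homogeneous polynomials of pairwise distinct order types $\ba_1,\ldots,\ba_t$; the goal is to show $t=1$. \Cref{strongly homogeneous components generate monomial ideals} already tells us that each $(g_i)_{S_d}$ equals the monomial ideal $(R_{\ba_i})$, and the argument there (via \Cref{contains all monomials of an order type}) shows moreover that $R_{\ba_i}\subseteq (f)_{S_d}$ for every $i$. In particular each $g_i$, being a $\kk$-linear combination of monomials of order type $\ba_i$, lies in $(f)_{S_d}$.

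The next step is to show that $g_i({\bf 1})\neq 0$ for every $i$. I would invoke \Cref{dim of strongly homogeneous ideal} contrapositively: if $g_i({\bf 1})$ were zero, then that lemma would give the strict containment $(g_i)_{S_d}\subsetneq (R_{\ba_i})$, contradicting the equality $(g_i)_{S_d}=(R_{\ba_i})$ just recorded.

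Now I would suppose for contradiction that $t\ge 2$ and apply \Cref{negative order type analysis} to the pair $g_1,g_2\in (f)_{S_d}$. These are homogeneous of the common degree $\deg(f)$, they share no order types because $\ba_1\ne \ba_2$ and each $g_i$ is strongly homogeneous of type $\ba_i$, and both satisfy $g_i({\bf 1})\ne 0$ by the previous step. The theorem then forces $(f)_{S_d}$ not to be a principal symmetric ideal, which is absurd since it is one by hypothesis. Hence $t=1$, so $f=g_1$ is strongly homogeneous of some order type $\ba$.

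To finish, the second half of \Cref{contains all monomials of an order type} applied to the strongly homogeneous $f$ gives $(f)_{S_d}=(R_\ba)$; picking any monomial $m$ of order type $\ba$ then yields $(m)_{S_d}=(R_\ba)=(f)_{S_d}$, as required. The main obstacle is producing the contradiction when $t\ge 2$, and its linchpin is the nonvanishing of the $g_i({\bf 1})$, which is precisely what allows \Cref{negative order type analysis} to be deployed on the distinct-order-type components of $f$.
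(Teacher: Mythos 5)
Your proposal is correct and follows essentially the same route as the paper: decompose $f$ into strongly homogeneous components, use \Cref{strongly homogeneous components generate monomial ideals} and \Cref{contains all monomials of an order type} to place each $g_i$ in $(f)_{S_d}$ with $(g_i)_{S_d}=(R_{\ba_i})$, deduce $g_i({\bf 1})\neq 0$ from the contrapositive of \Cref{dim of strongly homogeneous ideal}, and invoke \Cref{negative order type analysis} to rule out $t\ge 2$. The concluding step identifying $(f)_{S_d}$ with $(m)_{S_d}$ via the second half of \Cref{contains all monomials of an order type} is also exactly as intended.
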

\begin{proof}
    Assume to the contrary that $f$ contains monomial terms of different order types, $\ba_1,\dots,\ba_t$ say.
    Then we can write $f=g_1+\cdots+g_t$, where each $g_i$ is the sum of the terms of order type $\ba_i$.
    
    We show that 
    \begin{equation}\label{eq:monomialpsi}
    (g_1+\cdots+g_t)_{S_d}=(f)_{S_d}=(g_1)_{S_d}+\cdots+(g_t)_{S_d}.
    \end{equation}
    Clearly, one containment holds $$(f)_{S_d}=(g_1+\cdots+g_t)_{S_d}\subseteq(g_1)_{S_d}+\cdots+(g_t)_{S_d}.$$
    By  \Cref{strongly homogeneous components generate monomial ideals} we have $(g_i)_{S_d}=(R_{\ba_i})$ and by \Cref{contains all monomials of an order type} there are containments $R_{\ba_i}\subseteq(f)_{S_d}$.
    Hence, there is a containment
    $$(R_{\ba_i})+ \cdots+ (R_{\ba_t})=(g_1)_{S_d}+\cdots+(g_t)_{S_d}\subseteq(f)_{S_d}=(g_1+\cdots+g_t)_{S_d}.$$
    Since every term of $f$ is in $(R_{\ba_i})+ \cdots+ (R_{\ba_t})$, the claimed equality follows. In particular we see that $g_i\in (f)_{S_d}$ for each $i$. 
    
    Since $g_i$ is strongly homogeneous and $(g_i)_{S_d}=(R_{\ba_i})$, \Cref{dim of strongly homogeneous ideal} yields that $g_i({\bf 1})\not=0$ for all $1\leq i\leq t$. If $t>1$, then \Cref{negative order type analysis} contradicts the fact that $I$ is a psi. Hence $t=1$ and therefore $f$ is strongly homogeneous.

\end{proof}

\begin{cor}\label{principal monomial symmetric ideal}
    A principal symmetric ideal $I$  is a monomial ideal if and only if $I=(m)_{S_d}$ for some monomial $m$.
    % That is, if a principal symmetric ideal can be generated by monomials, then the main polynomial used to generate it can be a monomial
\end{cor}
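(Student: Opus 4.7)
The plan is to treat the corollary as a direct packaging of \Cref{principal symmetric monomial are strongly homogeneous}, so the proof is essentially a two-line argument, one for each implication.

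For the easy direction ($\Leftarrow$), I would observe that if $I=(m)_{S_d}$ for some monomial $m$, then the generating set $\{\sigma\cdot m : \sigma \in S_d\}$ consists entirely of monomials (each $\sigma\cdot m$ is again a monomial, since permuting variables in a monomial returns a monomial). Hence $I$ is generated by monomials and is therefore a monomial ideal.

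For the forward direction ($\Rightarrow$), suppose $I$ is a principal symmetric ideal that is also a monomial ideal. By the standing assumption in the paper, the generator of a psi is homogeneous, so we may write $I=(f)_{S_d}$ with $f$ homogeneous. Then \Cref{principal symmetric monomial are strongly homogeneous} applies and gives exactly the desired conclusion that $I=(m)_{S_d}$ for some monomial $m$.

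There is no real obstacle here; the entire content of the corollary is already encapsulated in the preceding theorem, which was proved by combining the structural lemmas (\Cref{contains all monomials of an order type}, \Cref{strongly homogeneous components generate monomial ideals}, \Cref{dim of strongly homogeneous ideal}) with the obstruction \Cref{negative order type analysis}. The corollary merely rephrases this as a clean biconditional characterizing when a psi is a monomial ideal.
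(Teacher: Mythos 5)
Your proof is correct and follows essentially the same route as the paper: the forward direction is an immediate application of \Cref{principal symmetric monomial are strongly homogeneous}, and the reverse direction simply notes that $(m)_{S_d}$ is generated by the monomials $\sigma\cdot m$. No gaps.
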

\begin{proof}
The forward direction follows from \Cref{principal symmetric monomial are strongly homogeneous}. For the backward direction, note that all ideals  $(m)_{S_d}$, where $m$ is any monomial, are both psi's and monomial ideals.
   \end{proof}

\section{Products and powers of principal symmetric ideals}

Simple examples show that powers, and hence products,  of psi's need not be psi's.

\begin{ex}
Consider the ideal $(x_1)_{S_2}=(x_1,x_2)$. Its square $I^2=(x_1^2,x_1x_2,x_2^2)$ cannot be a psi in $\kk[x_1,x_2]$ since it has three minimal generators whereas a psi with respect to the action of the symmetric group $S_2$ has at most two generators by \Cref{min gen principal check}.
\end{ex}

A contrasting behavior is offered by the following family of examples.

\begin{ex}\label{ex:k_sym_product}
Suppose that $f$ is a $\kk$-symmetric polynomial and $g$ is arbitrary. Then $(f)_{S_d}(g)_{S_d}=(fg)_{S_d}$ and $\left((f)_{S_d}\right)^n=(f^n)_{S_d}$ are principal symmetric ideals
\end{ex}

In view of \Cref{ex:k_sym_product} one may wonder whether this class of examples characterizes the affirmative answer to \Cref{q:powers_products}. In \Cref{product principal monomial}, \Cref{cor:powers monomial psi}, \Cref {thm:powers psi}, and \Cref{main product of psi} we confirm that this is indeed the case. 

We start with a result on the number of generators of a product of psi's of importance to the future developments in this section.

\begin{thm}\label{min num gens of product 1}
    Let $I=(f)_{S_d}$ be a principal symmetric ideal and let $J=(g_1,\dots,g_m)$ be a homogeneous ideal generated in a single degree. Assume that $f$ is not $\kk$-symmetric.
    Then 
    \[\mu(IJ)\geq\mu(J)+1.
    \] 
      In particular, if $I,J$ are principal symmetric ideals neither of which is generated by a $\kk$-symmetric polynomial, then $\mu(IJ)\geq \max\{\mu(I),\mu(J)\}+1$.
\end{thm}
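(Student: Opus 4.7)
The plan is to produce $m+1$ linearly independent elements inside $(IJ)_{n+e}$, where $n = \deg f$ and $e$ is the common degree of $g_1, \ldots, g_m$. Since $IJ$ is generated in the single degree $n+e$, this yields $\mu(IJ) = \dim_\kk (IJ)_{n+e} \geq m+1$, which is what we want. Using that $f$ is not $\kk$-symmetric I will pick $\tau \in S_d$ with $\tau \cdot f \notin \kk \cdot f$, and set $f_1 := f$ and $f_2 := \tau \cdot f$; these are linearly independent elements of $I_n$. Since $R$ is a domain, multiplication by any nonzero element is injective, so both $f_1 J_e$ and $f_2 J_e$ are $m$-dimensional $\kk$-subspaces of $(IJ)_{n+e}$. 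It therefore suffices to show that $f_1 J_e \neq f_2 J_e$, for then the two $m$-dimensional subspaces together span something of dimension at least $m+1$.

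The crux is ruling out the equality $f_1 J_e = f_2 J_e$. Assume it holds. Then for every $v \in J_e$ there is a unique $\phi(v) \in J_e$ satisfying $f_1 v = f_2 \phi(v)$ (uniqueness uses that $R$ is a domain), and this defines a $\kk$-linear endomorphism $\phi \colon J_e \to J_e$. After extending scalars to an algebraic closure $\overline\kk$, the induced endomorphism on the finite-dimensional $\overline\kk$-vector space $J_e \otimes_\kk \overline\kk$ admits some eigenvalue $\lambda \in \overline\kk$ with a nonzero eigenvector $v$, which I view as a polynomial in $\overline\kk[x_1, \ldots, x_d]$. The defining identity of $\phi$ then yields $(f_1 - \lambda f_2)\, v = 0$ in the domain $\overline\kk[x_1, \ldots, x_d]$, forcing $f_1 = \lambda f_2$. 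This contradicts the linear independence of $f_1$ and $f_2$, which is preserved under base change, completing the proof of the main inequality.

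For the ``in particular'' clause, I apply the inequality just established twice, exchanging the roles of $I$ and $J$ the second time; this is legitimate since a principal symmetric ideal generated by a non-$\kk$-symmetric polynomial is, in particular, a homogeneous ideal generated in a single degree. Combining the two bounds gives $\mu(IJ) \geq \max\{\mu(I), \mu(J)\} + 1$. The main conceptual hurdle is the eigenvalue step, which forces passage to $\overline\kk$ to guarantee a root of the characteristic polynomial of $\phi$; once $\phi$ is in hand, the final contradiction is a one-line computation inside a polynomial ring.
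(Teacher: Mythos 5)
Your proof is correct, and it takes a genuinely different route from the paper's. Both arguments reduce to the same key point: for $h=\tau\cdot f$ with $f,h$ linearly independent (which exists precisely because $f$ is not $\kk$-symmetric), one must show $fJ_e + hJ_e$ has dimension at least $\dim_\kk J_e+1$, i.e.\ that $fJ_e\neq hJ_e$. The paper does this by unique factorization: after first reducing to the case where $f$ has no $\kk$-symmetric divisor and the $g_i$ have no common factor, it locates an irreducible factor $p$ of $f$ not dividing $h$, and derives from $hg_i\in fJ_e$ that $p\mid g_i$ for all $i$. You instead observe that $fJ_e=hJ_e$ would define a linear endomorphism $\phi$ of $J_e$ (essentially multiplication by $f/h$ in the fraction field), and an eigenvector over $\overline{\kk}$ forces $f=\lambda h$, contradicting linear independence. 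Your argument is cleaner in that it needs no preliminary reductions and no appeal to the UFD property --- only that $R$ is a domain --- and it actually proves the more general fact that $f_1V\neq f_2V$ for any linearly independent $f_1,f_2$ and any nonzero finite-dimensional subspace $V$; the paper's divisibility argument is more concrete but requires the (slightly delicate) claim that some irreducible factor of $f$ fails to divide some $\sigma\cdot f$. Two cosmetic points: you should write $\mu(J)=\dim_\kk J_e$ rather than $m$ throughout, since the listed generators $g_1,\dots,g_m$ need not be linearly independent (your subspace argument already gives the bound in terms of $\dim_\kk J_e$, which is what the statement requires); and, like the paper, you implicitly assume $J\neq 0$ so that an eigenvector exists, but the statement is vacuous otherwise.
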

\begin{proof}
We may assume that $f$ has no $\kk$-symmetric divisor and that the polynomials $g_1,\ldots, g_m$ have no non-constant common divisor since dividing  by such divisors does not change any of the quantities involved. Then there is an irreducible factor of $f$, call it $p$, that does not divide all polynomials $\sigma\cdot f$. Indeed, if $p\mid \sigma\cdot f$ for all $\sigma\in S_d$ then $\sigma^{-1}\cdot p\mid f$ and therefore $\left(\prod_{\sigma\in S_d} \sigma\cdot p\right )\mid f$. Since the product  $\prod_{\sigma\in S_d} \sigma\cdot p$ is $\kk$-invariant we have obtained a contradiction.

 Pick $\sigma\in S_d$ so that $p\nmid \sigma\cdot f$. Let $U$ be the span of the minimal generators of $IJ$ and let $V$ be the span of the minimal generators of $J$. Let $h=\sigma\cdot f$. Then $U$ contains $fV+hV$ and consequently we have
  \begin{align*}
        \mu(IJ) &= \dim_\kk U \\
                &\geq  \dim_\kk( fV+hV) \\
                &\geq \dim_\kk (fV) +\dim_\kk (hV)-\dim_\kk(fV\cap hV) \\
                &\geq \dim_\kk (fV)+1 = \dim_\kk (V)+1 \\
                &= \mu(J)+1.
    \end{align*}
    The only inequality that requires explanation is $\dim_\kk (hV)-\dim_\kk(fV\cap hV)\geq 1$, equivalently $\dim_\kk (hV) > \dim_\kk(fV\cap hV)$ or $hV \supsetneq fV\cap hV$. Assume towards a contradiction that $hV = fV\cap hV$. Then each element of $hV$ is in $fV$. For example, $hg_i\in fV$ for each $i$, which yields that $f$, and therefore also $p$, divides $hg_i$ for each $i$. However, $p$ is coprime to $h$ by assumption, so $p$ divides each $g_i$. This contradicts the assumption that $g_1,\ldots, g_m$ have no non-constant common divisor, thus proving the claim.
\end{proof}

\subsection{Products and powers of monomial  principal symmetric ideals}

The following theorem answers \Cref{q:powers_products} for products of monomial psi's.

\begin{thm} \label{product principal monomial}
    Let $I,J$ be monomial  principal symmetric ideals.
    Then $IJ$ is a principal symmetric ideal if and only if $I=((x_1\cdots x_d)^n)_{S_d}$ or $J=((x_1\cdots x_d)^n)_{S_d}$ for some $n\geq0$.
\end{thm}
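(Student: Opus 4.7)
The plan is to apply \Cref{principal monomial symmetric ideal} to reduce to the case where $I=(m_1)_{S_d}$ and $J=(m_2)_{S_d}$ for monomials $m_1=x_1^{a_1}\cdots x_d^{a_d}$ and $m_2=x_1^{b_1}\cdots x_d^{b_d}$, and then to analyze when the minimal monomial generators of $IJ$ can all lie in a single $S_d$-orbit.

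The ``if'' direction is immediate: $(x_1\cdots x_d)^n$ is $S_d$-invariant, hence $\kk$-symmetric, so \Cref{ex:k_sym_product} gives $IJ=((x_1\cdots x_d)^n\, m_2)_{S_d}$ (or the symmetric statement with the roles of $I$ and $J$ reversed), which is visibly a psi.

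For the ``only if'' direction, suppose $IJ$ is a psi. Since $IJ$ is automatically a monomial ideal, \Cref{principal monomial symmetric ideal} forces $IJ=(m)_{S_d}$ for a single monomial $m$. Every generator of $IJ$ has the form $\sigma(m_1)\,\tau(m_2)$, and since all such products share the total degree $|\ba|+|\bb|$, none can properly divide another. So the minimal generators of $IJ$ are exactly the distinct monomials with exponent vectors $(a_{\sigma(i)}+b_{\tau(i)})_{i=1}^d$ as $\sigma,\tau$ range over $S_d$. The condition $IJ=(m)_{S_d}$ then demands that all these exponent vectors have the same sorted form; fixing $\sigma=\mathrm{id}$, the multiset $\{a_i+b_{\tau(i)}\}_{i=1}^d$ must be independent of $\tau\in S_d$.

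The combinatorial heart of the argument is to specialize this multiset equality to transpositions $\tau=(i\,j)$: this yields $\{a_i+b_i,\,a_j+b_j\}=\{a_i+b_j,\,a_j+b_i\}$ as multisets, and a one-line case split forces $a_i=a_j$ or $b_i=b_j$. Equivalently, whenever $a_i\neq a_j$ one must have $b_i=b_j$. The main obstacle I anticipate is promoting this local constraint to the global conclusion that $\ba$ or $\bb$ is constant, but a short pigeonhole suffices: if $\ba$ is non-constant, pick $i_0,j_0$ with $a_{i_0}\neq a_{j_0}$ so that $b_{i_0}=b_{j_0}$, and observe that any other index $k$ differs in $\ba$-value from at least one of $i_0,j_0$, forcing $b_k=b_{i_0}=b_{j_0}$. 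Hence $\bb$ is constant and $m_2=(x_1\cdots x_d)^n$ for some $n\geq 0$; the case with $\bb$ non-constant is symmetric.
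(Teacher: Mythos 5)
Your proposal is correct. The overall architecture matches the paper's: both directions reduce, via \Cref{ex:k_sym_product} and \Cref{principal symmetric monomial are strongly homogeneous}/\Cref{principal monomial symmetric ideal}, to comparing the order types of the products $\bx^{\ba+\bb}$ and $\bx^{\ba+\tau\cdot\bb}$ for a transposition $\tau$. Where you diverge is in the combinatorial finish. The paper argues contrapositively with one carefully chosen transposition: it picks the smallest indices $i,j$ with $a_1\neq a_i$ and $b_1\neq b_j$, assumes WLOG $j\geq i$, and shows $\ba+\bb$ and $\ba+(1\,j)\cdot\bb$ have different order types by counting how many components attain the maximum value $a_1+b_1$ ($i-1$ versus $i-2$). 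You instead extract from \emph{every} transposition the multiset identity $\{a_i+b_i,\,a_j+b_j\}=\{a_i+b_j,\,a_j+b_i\}$, which forces $a_i=a_j$ or $b_i=b_j$ for each pair, and then globalize by pigeonhole to conclude one of $\ba,\bb$ is constant. Your version buys robustness — it never uses that the exponent vectors are sorted, avoids the WLOG and the max-counting, and isolates a clean local condition — while the paper's version exhibits two explicit minimal generators of distinct order types, which is slightly more constructive. Both are complete; the one small point worth making explicit in your write-up is that all the products $\sigma(m_1)\tau(m_2)$ have the same total degree, so the minimal generating set of $IJ$ really is the full set of distinct such products (you do note this, and it is what licenses reading off the order-type constraint from \emph{all} pairs $(\sigma,\tau)$).
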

\begin{proof}
    $\Leftarrow:$ follows from \Cref{ex:k_sym_product}.
    \\
    $\Rightarrow:$
    Suppose that $I\not=((x_1\cdots x_d)^n)_{S_d}$ and $J\not=((x_1\cdots x_d)^n)_{S_d}$ for all $n\geq0$.
    Let $\ba,\bb$ be partitions such that $I=(\bx^\ba)_{S_d}$ and $J=(\bx^\bb)_{S_d}$, where $\bx^\ba=x_1^{a_1}\cdots x_d^{a_d}$ and $\bx^\bb=x_1^{b_1}\cdots x_d^{b_d}$.
    We now show that $IJ$ is not strongly homogeneous and thus not a monomial psi in view of \Cref{principal symmetric monomial are strongly homogeneous}.
    By assumption, $\ba,\bb\not=(n,\dots,n)$ for any $n$.
    Therefore, there exist $i,j$, which we pick to be smallest possible, such that $a_1\not=a_i$ and $b_1\not=b_j$.
    Without loss of generality assume that $j\geq i$.
       We can write 
    \begin{eqnarray*}
    \ba &=& (\underbrace{a_1,\dots,a_1}_{\mbox{\small $i-1$ times}},a_i,\dots)\\
     \bb &=& (\underbrace{b_1,\dots,b_1}_{\mbox{\small $j-1$ times}}, b_j,\dots)\\
      (1 \, j)\cdot\bb &=& (b_j,\underbrace{b_1,\dots,b_1}_{\mbox{\small $j-1$ times}},\dots).
\end{eqnarray*}
    Furthermore,
    $$\ba+\bb=(\underbrace{a_1+b_1,\dots,a_1+b_1}_{\mbox{\small $i-1$ times}}, a_i+b_1,\dots)$$
    and 
    $$\ba+(1 \, j)\cdot\bb=(a_1+b_j,\underbrace{a_1+b_1,\dots,a_1+b_1}_{\mbox{\small $i-2$ times}},a_i+b_1,\dots).$$

    Since $\ba,\bb$ are monotonic non-increasing vectors, $a_1+b_1\geq a_{i'}+b_{j'}$ for any $i',j'$.
    Furthermore, since $a_1>a_i\geq a_{i'}$ and $b_1>b_j\geq b_{j'}$ for $i'\geq i, j'\geq j$, every component of $\ba+(1 \, j)\cdot\bb$ after the first $i$ terms is strictly less than $a_1+b_1$.
    Therefore, $\ba+(1 \, j)\cdot\bb$ contains precisely $i-2$ components with maximum value, while $\ba+\bb$ contains $i-1$ components with maximum value.
    Hence, $\ba+\bb$ and $\ba+(1 \, j)\cdot\bb$ must have different order types.
    Therefore, since both $\bx^{\ba+\bb}$ and $\bx^{\ba+(1\, j)\cdot\bb}$ are minimal generators for $IJ$, $IJ$ is not strongly homogeneous and thus not principal monomial symmetric.
\end{proof}

\begin{cor}\label{cor:powers monomial psi}
Let $I$ be a monomial principal symmetric ideal of $\kk[x_1,\ldots, x_d]$. The following statements are equivalent:
\begin{enumerate}
\item $I^n$ is a principal symmetric ideal for all integers $n\geq 1$
\item $I^2$ is a principal symmetric ideal
\item $I$ is generated by a $\kk$-invariant monomial of the form $(x_1\cdots x_d)^t$ for some $t\geq 1$.
\end{enumerate}
\end{cor}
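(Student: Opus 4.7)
The plan is to deduce this corollary as a straightforward application of the product theorem \Cref{product principal monomial} specialized to $J = I$, together with the family of examples in \Cref{ex:k_sym_product}. I would organize the proof as a cycle $(3) \Rightarrow (1) \Rightarrow (2) \Rightarrow (3)$.

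For $(3) \Rightarrow (1)$: if $I = ((x_1\cdots x_d)^t)_{S_d}$, the generator $(x_1\cdots x_d)^t$ is literally fixed by every $\sigma \in S_d$, hence in particular $\kk$-symmetric in the sense of the paper. Then \Cref{ex:k_sym_product} immediately yields $I^n = ((x_1\cdots x_d)^{tn})_{S_d}$, which is a psi for every $n \geq 1$. The implication $(1) \Rightarrow (2)$ is simply the case $n = 2$.

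For $(2) \Rightarrow (3)$, which is the only implication with content, I would apply \Cref{principal monomial symmetric ideal} first to write $I = (\bx^\ba)_{S_d}$ for some partition $\ba$, so that the hypothesis becomes that $I \cdot I$ is a monomial psi. Then \Cref{product principal monomial}, with the roles of both $I$ and $J$ played by $I$ itself, forces one of the factors to be of the form $((x_1\cdots x_d)^n)_{S_d}$, that is, $I = ((x_1\cdots x_d)^n)_{S_d}$ for some $n \geq 0$. The case $n = 0$ corresponds to $I = R$, which is excluded because a homogeneous monomial psi in the sense of the paper is generated in some fixed positive degree; hence $n \geq 1$, and setting $t = n$ gives (3).

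There is no genuine obstacle here, since the heavy lifting was done in \Cref{product principal monomial}. The only point worth double-checking is the boundary case $n = 0$ versus $t \geq 1$, which I handle by noting that the excerpt's standing setup treats psi's as generated by a nonconstant homogeneous polynomial, so $I \neq R$ and consequently $t \geq 1$.
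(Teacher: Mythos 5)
Your proposal is correct and follows essentially the same route as the paper: the paper likewise declares $(1)\Rightarrow(2)$ and $(3)\Rightarrow(1)$ clear (the latter via \Cref{ex:k_sym_product}) and derives $(2)\Rightarrow(3)$ from \Cref{product principal monomial} with $J=I$. Your extra remark about excluding the boundary case $n=0$ (i.e.\ $I=R$) is a reasonable point of care that the paper's one-line proof passes over silently.
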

\begin{proof}
The implications $(1)\Rightarrow (2)$ and $(3)\Rightarrow (1)$ are clear. The implication $(2)\Rightarrow (3)$ follows from \Cref{product principal monomial} by setting $J=I$.
\end{proof}

Another way to phrase \Cref{product principal monomial} is that for monomial psi's $I$ and $J$, $IJ$ is a psi if and only if $\mu(I)=1$ or $\mu(J)=1$. \Cref{ex:order types product} shows that in general, for (not necessarily monomial) psi's $I,J$, the invariants $\mu(I),\mu(J)$ are not sufficient to determine if $IJ$ is principal symmetric.

\begin{ex}\label{ex:order types product}
Let $I=(x_1x_2)_{S_4}$ and $J=(x_1x_2+x_3x_4)_{S_4}$ and $J'=(x_1-x_2)_{S_4}$.
Then $\mu(J)=\mu(J')=3$ as computed using Macaulay2 \cite{M2}.
Note that $IJ$ is not principal symmetric by \Cref{negative order type analysis} since $x_1x_2x_3x_4+x_3^2x_4^2$ and $x_2x_3^2x_4+x_1x_3x_4^2$ are elements of minimal degree of $IJ$ that share no order types and are nonzero when evaluated at ${\bf 1}$.
However, $IJ'=(x_1x_2(x_1-x_3))_{S_4}$ is principal symmetric.
\end{ex}

\subsection{Powers of  principal symmetric ideals}

We now turn our attention to powers of arbitrary  pis's.  

%A convenient tool in analyzing the powers of an ideal is the Rees algebra.
%
%\begin{defn}
%Let $I$ be an ideal of a ring $R$. The {\em Rees algebra} of $I$ is the graded ring $\cR(I)=\bigoplus_{n\geq 0} I^n$. If $R$ is a grade ring with homogeneous maximal ideal $\fm$ then the  {\em special fiber ring} of $I$ is $\cF(I)=\cR(I)/\fm \cR(I)=\bigoplus_{n\geq 0} I^n/\fm I^n$.
%\end{defn}
%
%Since, by Nakayama's lemma $\mu(I^n)=\dim_\kk (I^n/\fm I^n)$, the Hilbert function of the special fiber ring encodes the number of minimal generator for the powers of $I$. Specifically this gives $H(\cF(I),n)=\mu(I^n)$.

\begin{thm}\label{thm:powers psi}
Let $I$ be a nonzero principal symmetric ideal. The following are equivalent:
\begin{enumerate}
\item $I^n$ is a principal symmetric ideal for all integers $n\geq 1$
\item $I^n$ is a principal symmetric ideal for all sufficiently large integers $n$
\item $I$ is generated by a $\kk$-invariant polynomial.
\end{enumerate}
\end{thm}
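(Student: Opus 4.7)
The plan is to establish the cyclic chain of implications $(3) \Rightarrow (1) \Rightarrow (2) \Rightarrow (3)$. The first two are immediate: $(1) \Rightarrow (2)$ is tautological, and $(3) \Rightarrow (1)$ was already recorded in \Cref{ex:k_sym_product}, where $\kk$-symmetry of a generator $f$ was shown to force $\left((f)_{S_d}\right)^n = (f^n)_{S_d}$, which is visibly a psi.

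The substantive content lies in $(2) \Rightarrow (3)$, which I would prove by contrapositive. Suppose $I = (f)_{S_d}$ is a psi that is not generated by any $\kk$-symmetric polynomial; then in particular the chosen $f$ itself fails to be $\kk$-symmetric, which is precisely the hypothesis needed to invoke \Cref{min num gens of product 1}. Since $I$ is generated in the single degree $\deg(f)$, so is every power $I^{n-1}$ (all its generators live in degree $(n-1)\deg(f)$). Taking $J = I^{n-1}$ in that theorem yields $\mu(I^n) = \mu(I\cdot I^{n-1}) \geq \mu(I^{n-1}) + 1$ for every $n \geq 2$, and a telescoping induction based at $\mu(I) \geq 1$ then delivers the linear lower bound $\mu(I^n) \geq n$ for all $n \geq 1$.

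This linear growth is incompatible with large powers being psi's: by \Cref{min gen principal check}, any psi has at most $d!$ minimal generators. For any integer $n > d!$ we would therefore obtain $\mu(I^n) > d!$, so $I^n$ cannot be a psi, contradicting $(2)$. The only step requiring a touch of care in writing up is the contrapositive setup: the hypothesis "$I$ is not generated by any $\kk$-symmetric polynomial" is a property of the ideal rather than of a specific presentation, so I want to phrase matters so that the $f$ realizing $I = (f)_{S_d}$ is indeed non-$\kk$-symmetric at the outset, licensing the use of \Cref{min num gens of product 1}. Beyond this bookkeeping, I do not anticipate a real obstacle; the argument collapses to a clean telescoping bound pitted against the constant ceiling $d!$.
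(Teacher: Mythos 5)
Your proposal is correct and matches the paper's proof: the authors also dispose of $(1)\Rightarrow(2)$ and $(3)\Rightarrow(1)$ as immediate, and prove $(2)\Rightarrow(3)$ by applying \Cref{min num gens of product 1} inductively to get $\mu(I^n)\geq \mu(I)+n-1$, then contradicting the $d!$ ceiling from \Cref{min gen principal check}. Your telescoping bound $\mu(I^n)\geq n$ is the same estimate in slightly weaker form, and your care about the contrapositive setup (ensuring the generator $f$ itself is non-$\kk$-symmetric) is exactly the right bookkeeping.
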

\begin{proof}
The implications $(1)\Rightarrow (2)$ and $(3)\Rightarrow (1)$ are clear. For $(2)\Rightarrow (3)$ assume towards a contradiction that $I$ is not generated by a $\kk$-invariant polynomial and note that applying \Cref{min num gens of product 1} inductively it follows that $\mu(I^n)\geq \mu(I) +n-1$. For sufficiently large $n$ we conclude that $\mu(I^n)>d!$, which contradicts \Cref{min gen principal check}.
\end{proof}

Comparing \Cref{cor:powers monomial psi} and  \Cref{thm:powers psi} we see that the result concerning monomial ideals is stronger. However, outside the case of monomial psi's it is not the case that $I^2$ being a psi forces $I$ to be generated by a $\kk$-invariant polynomial, as the following example demonstrates.

\begin{ex}\label{ex:d>2}
Let $\kk$ be a field of characteristic not equal to 2 and consider $I=(x_1-x_2)_{S_d}$. Note that $x_1-x_2$ is {\em not} a $\kk$-invariant polynomial in $\kk[x_1,\ldots,x_d]$ for $d>2$. We show that $I^2=((x_1-x_2)^2)_{S_d}$ is a psi. Since the inclusion $((x_1-x_2)^2)_{S_d}\subseteq I^2$ is evident, equality follows from expressing the generators of $I^2$ as elements of $((x_1-x_2)^2)_{S_d}$ as shown below
\begin{flalign*}
	& 2(x_{i}-x_{j})(x_{p}-x_{q}) \\
	 &=   (2x_{q}-x_{i}-x_{j})(x_{j}-x_{i}) + (-2x_{p}+x_{i}+x_{j})(x_{j}-x_{i}) \\
    &= (x_{q}-x_{i})^{2}-(x_{q}-x_{j})^{2}+(x_{j}-x_{p})^{2}-(x_{i}-x_{p})^{2}.
\end{flalign*}
\end{ex}

\subsection{The two variable case}

The case of psi's in the ring $\kk[x_1,x_2]$ closely resembles the monomial case.

\begin{thm} \label{main product of psi}
    Let $I=(f)_{S_2}$ and $J=(g)_{S_2}$ be homogeneous principal symmetric ideals of $K[x_1,x_2]$.  Then $IJ$ is a principal symmetric ideal if and only if $f$ or $g$ is $\kk$-symmetric.
\end{thm}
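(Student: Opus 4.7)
The backward direction is immediate from \Cref{ex:k_sym_product}: if, say, $f$ is $\kk$-symmetric, then $(f)_{S_2}(g)_{S_2} = (fg)_{S_2}$, which is visibly a psi.

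For the forward direction I would argue by contrapositive, assuming neither $f$ nor $g$ is $\kk$-symmetric and deducing $IJ$ cannot be a psi. The first step is a dichotomy peculiar to two variables: for a nonzero homogeneous $h \in \kk[x_1,x_2]$, the psi $(h)_{S_2}$ has $\mu((h)_{S_2}) = 1$ if and only if $h$ is $\kk$-symmetric. Indeed, $S_2$ has only the nontrivial element $\sigma = (1\,2)$, so $(h)_{S_2} = (h, \sigma \cdot h)$; these two homogeneous polynomials of equal degree span a $1$-dimensional $\kk$-space precisely when $\sigma \cdot h = \lambda h$ for some $\lambda \in \kk$, which is the definition of $\kk$-symmetry. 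Combined with \Cref{min gen principal check}, which gives $\mu \leq 2! = 2$ for any psi in two variables, we conclude $\mu(I) = \mu(J) = 2$ under our assumption.

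The second step is to invoke the second assertion of \Cref{min num gens of product 1}: since neither $I$ nor $J$ is generated by a $\kk$-symmetric polynomial, we obtain
\[
\mu(IJ) \;\geq\; \max\{\mu(I),\mu(J)\} + 1 \;=\; 3.
\]
But \Cref{min gen principal check} forces $\mu(L) \leq 2$ for any psi $L$ in $\kk[x_1,x_2]$, so $IJ$ cannot be a psi. This contradiction completes the contrapositive.

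The only nonroutine ingredient is the $\mu = 1 \iff \kk$-symmetric dichotomy, and even that is essentially a one-line linear algebra observation because $|S_2| = 2$. Everything else is a direct application of the general number-of-generators bounds already established in \Cref{min gen principal check} and \Cref{min num gens of product 1}. The reason the argument is so short in two variables (compared to, e.g., \Cref{ex:d>2}) is exactly that $d! = 2$ matches the generic generator count of a non-$\kk$-symmetric psi, so multiplying two such ideals immediately pushes $\mu$ past the allowed threshold.
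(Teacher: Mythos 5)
Your proposal is correct and follows essentially the same route as the paper: establish $\mu(I)=\mu(J)=2$ from the non-$\kk$-symmetry of $f$ and $g$ via \Cref{min gen principal check}, then apply \Cref{min num gens of product 1} to force $\mu(IJ)\geq 3 > 2 = |S_2|$, contradicting \Cref{min gen principal check}. Your explicit one-line justification of the dichotomy $\mu((h)_{S_2})=1 \iff h$ is $\kk$-symmetric is a welcome elaboration of a step the paper leaves implicit, but it is not a different argument.
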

\begin{proof}
Assume that neither $f$ nor $g$ are $\kk$-symmetric. Since $f$ and $g$ are not $\kk$-symmetric $\mu(I),\mu(J)\not=1$, so $\mu(I)=2$ and $\mu(J)=2$ according to \Cref{min gen principal check}.  Therefore, by \Cref{min num gens of product 1}, it follows that $$\mu(IJ)\geq2+1=3>2=|S_2|.$$ Thus, by \Cref{min gen principal check}, $IJ$ is not a principal symmetric ideal.
\end{proof}

We have obtained an analogue of \Cref{cor:powers monomial psi}. 

\begin{cor}\label{main power of psi}
    Let $I=(f)_{S_2}$ be a homogeneous principal symmetric ideal of $\kk[x_1,x_2]$.
    Then $I^n$ is a principal symmetric ideal  for all integers $n\geq 1$ if and only if $I^2$ is a principal symmetric ideal if  and only if $f$ is $\kk$-symmetric.
\end{cor}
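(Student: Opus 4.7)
The plan is to establish the three equivalent conditions by cycling $(1)\Rightarrow(2)\Rightarrow(3)\Rightarrow(1)$, closely paralleling the argument of \Cref{cor:powers monomial psi}.

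First, the implication $(1)\Rightarrow(2)$ is immediate by specializing to $n=2$. Next, $(3)\Rightarrow(1)$ follows directly from \Cref{ex:k_sym_product}: if $f$ is $\kk$-symmetric, then $\left((f)_{S_2}\right)^n = (f^n)_{S_2}$ is a principal symmetric ideal for every $n\geq 1$, since $f^n$ is again $\kk$-symmetric.

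The core of the argument is the implication $(2)\Rightarrow(3)$, and my approach is to reduce it to \Cref{main product of psi} by setting $J=I$ (equivalently $g=f$). That theorem asserts that for homogeneous psi's $I=(f)_{S_2}$ and $J=(g)_{S_2}$ in $\kk[x_1,x_2]$, the product $IJ$ is a psi if and only if at least one of $f$ or $g$ is $\kk$-symmetric. With $g=f$, the assumption that $I^2$ is a psi forces $f$ itself to be $\kk$-symmetric, which is precisely~(3).

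There is no real obstacle in this corollary. The substantive content, which exploits the two-variable constraint $\mu(I),\mu(J)\leq 2 = |S_2|$ together with the generator-count inequality $\mu(IJ)\geq \max\{\mu(I),\mu(J)\}+1$ from \Cref{min num gens of product 1}, has already been packaged in \Cref{main product of psi}. The present corollary simply harvests the diagonal case $J=I$.
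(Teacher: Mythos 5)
Your proposal is correct and matches the paper's (implicit) argument exactly: the corollary is harvested from \Cref{main product of psi} in the diagonal case $J=I$ for the implication from $I^2$ being a psi to $f$ being $\kk$-symmetric, together with \Cref{ex:k_sym_product} for the converse direction and the trivial specialization from all powers to the square. No gaps; nothing further is needed.
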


To see that the equivalence $I^2$ is a principal symmetric ideal if  and only if $f$ is $\kk$-symmetric does not extend to $d>2$ variables, consider \Cref{ex:d>2}.

\subsection{Analysis of products in terms of stabilizers}

\begin{defn}
  
    For $f\in \kk[x_1,\dots,x_d]$, let $\S_f=\{\sigma\in S_d : \sigma\cdot f=cf\mbox{ for some nonzero $c\in \kk$}\}$ be the $\kk$-stabilizer of $f$.
\end{defn}
Note that $\S_f$ is a subgroup of $S_d$. Also let $\iota$ be the identity permutation of $S_d$.
\begin{prop}\label{product of stabilizers}
    Let $I=(f)_{S_d}$ and $J=(g)_{S_d}$.
    If $S_d=\S_f\S_g$, then $IJ=(fg)_{S_d}$, so $IJ$ is a principal symmetric ideal.
\end{prop}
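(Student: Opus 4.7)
The plan is to prove equality of ideals by generators. The containment $(fg)_{S_d} \subseteq IJ$ is immediate because for every $\pi \in S_d$ one has $\pi \cdot (fg) = (\pi \cdot f)(\pi \cdot g)$, which is manifestly a product of a generator of $I$ and a generator of $J$. The real work is the reverse containment, where one must show that each bilinear generator $(\sigma \cdot f)(\tau \cdot g)$ of $IJ$ can be rewritten as a scalar multiple of some $\pi \cdot (fg)$.

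The key idea is to find a single permutation $\pi \in S_d$ that simultaneously witnesses both factors: that is, a $\pi$ with $\pi \cdot f$ a scalar multiple of $\sigma \cdot f$ and $\pi \cdot g$ a scalar multiple of $\tau \cdot g$. Since the $S_d$ action is a left action, $\pi \cdot f = c(\sigma \cdot f)$ for some $c \in \kk^\times$ is equivalent to $\sigma^{-1}\pi \in \S_f$, and similarly $\pi \cdot g = c'(\tau \cdot g)$ is equivalent to $\tau^{-1}\pi \in \S_g$. So I want to find $s_1 \in \S_f$ and $s_2 \in \S_g$ with $\sigma s_1 = \tau s_2$, that is, $\sigma^{-1}\tau = s_1 s_2^{-1}$. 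Since $\S_g$ is a subgroup we have $\S_f \S_g^{-1} = \S_f \S_g$, and by hypothesis $\S_f \S_g = S_d$, so such $s_1, s_2$ exist. Setting $\pi = \sigma s_1$ produces the desired permutation.

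Having $\pi$ in hand, write $\sigma \cdot f = c_1^{-1}(\pi \cdot f)$ and $\tau \cdot g = c_2^{-1}(\pi \cdot g)$ for nonzero $c_1, c_2 \in \kk$. Then
\[
(\sigma \cdot f)(\tau \cdot g) = (c_1 c_2)^{-1} (\pi \cdot f)(\pi \cdot g) = (c_1 c_2)^{-1} \pi \cdot (fg) \in (fg)_{S_d},
\]
which shows every generator of $IJ$ lies in $(fg)_{S_d}$ and completes the reverse containment. In particular $IJ = (fg)_{S_d}$ is a principal symmetric ideal.

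The main obstacle here is conceptual rather than computational: correctly translating the set-theoretic hypothesis $S_d = \S_f \S_g$ into the existence of a common witness $\pi$ for the pair $(\sigma, \tau)$. Once one notices that this reduces to $\sigma^{-1}\tau \in \S_f \S_g^{-1}$ and that $\S_g^{-1} = \S_g$ because $\S_g$ is a subgroup, the rest is bookkeeping about left actions and scalars.
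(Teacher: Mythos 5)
Your proof is correct and follows essentially the same route as the paper: both arguments use the hypothesis $S_d=\S_f\S_g$ to produce a single permutation $\pi$ (the paper's $\sigma\gamma^{-1}=\tau\delta$) whose action recovers both $\sigma\cdot f$ and $\tau\cdot g$ up to scalars, so that $(\sigma\cdot f)(\tau\cdot g)$ is a scalar multiple of $\pi\cdot(fg)$. Your version merely packages the paper's chain of manipulations more transparently; no gap.
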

\begin{proof}
    By definition one has $IJ=((\sigma\cdot f)(\tau\cdot g) : \sigma,\tau\in S_d)$.
    For each $\sigma\cdot(fg)$ in the generating set of $(fg)_{S_d}$, $\sigma\cdot(fg)=(\sigma\cdot f)(\sigma\cdot g)$.
    Thus, $\sigma\cdot(fg)\in IJ$ yields $(fg)_{S_d}\subseteq IJ$.

    Given $(\sigma\cdot f)(\tau\cdot g)$, by assumption, $\sigma^{-1}\tau\in \S_f\S_g$, so $\sigma^{-1}\tau=\gamma^{-1}\delta^{-1}$ for some $\gamma\in \S_f$ and $\delta\in \S_g$.
    Hence, $\iota=\gamma\sigma^{-1}\tau\delta$.
    By definition of $\S_f$ and $\S_g$, $c_1(\sigma\gamma^{-1})\cdot f=\sigma\cdot f$ and $c_2(\tau\delta)\cdot g=\tau\cdot g$ for some nonzero $c_1,c_2\in \kk$.
    Therefore,
    $$\begin{aligned}
        (\sigma\cdot f)(\tau\cdot g) &= (c_1(\sigma\gamma^{-1})\cdot f)(c_2(\tau\delta)\cdot g) \\
        &= c_1c_2(\sigma\gamma^{-1}\cdot f)(\tau\delta\cdot g) \\
        &= c_1c_2(\sigma\gamma^{-1}\gamma\sigma^{-1})\cdot\left((\sigma\gamma^{-1}\cdot f)(\tau\delta\cdot g) \right)\\
        &= c_1c_2 (\sigma\gamma^{-1})\cdot \left((\gamma\sigma^{-1}\sigma\gamma^{-1}\cdot f)(\gamma\sigma^{-1}\tau\delta\cdot g) \right)\\
        &= c_1c_2(\sigma\gamma^{-1}\cdot(f(\gamma\sigma^{-1}\tau\delta\cdot g))) \\
        &= c_1c_2(\sigma\gamma^{-1}\cdot(fg)) \qquad\text{(since }\gamma\sigma^{-1}\tau\delta=\iota).
    \end{aligned}$$
The displayed computations show that $(\sigma\cdot f)(\tau\cdot g) \in (fg)_{S_d}$, thus, $IJ\subseteq (fg)_{S_d}$. 
\end{proof}
\begin{cor} \label{alt group gives principal}
    Let $I=(f)_{S_d}$ and $J=(g)_{S_d}$.
    If $\S_f=A_d$  (the alternating group) and $|\S_g|=2$, then $IJ=(fg)_{S_d}$ is a principal symmetric ideal.
\end{cor}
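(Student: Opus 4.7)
The plan is to deduce \Cref{alt group gives principal} as an immediate application of \Cref{product of stabilizers}. That proposition says that whenever $S_d = \S_f \S_g$, the product $IJ$ equals $(fg)_{S_d}$, which is visibly a principal symmetric ideal. So the entire task reduces to verifying the group-theoretic identity $S_d = A_d \cdot \S_g$.

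To do this, I would first write $\S_g = \{\iota, \tau\}$, where $\tau$ is the unique nontrivial element (necessarily an involution, since $\S_g$ is a subgroup). Then
\[
\S_f \cdot \S_g = A_d \cdot \{\iota,\tau\} = A_d \cup A_d \tau.
\]
Since $A_d$ has index $2$ in $S_d$, the coset $A_d \tau$ equals $A_d$ when $\tau$ is even and coincides with the set of odd permutations $S_d \setminus A_d$ when $\tau$ is odd. Hence $A_d \cdot \S_g = S_d$ precisely when $\tau \notin A_d$, which is the case of interest (for example when $\tau$ is a transposition, as with $g = x_1 - x_2$, whose $\kk$-stabilizer is $\{\iota, (1\,2)\}$). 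Feeding this identity into \Cref{product of stabilizers} yields $IJ = (fg)_{S_d}$ and finishes the argument.

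The only genuine subtlety — and the step I would flag as a small obstacle — is that the corollary implicitly requires the nontrivial element of $\S_g$ to be odd. If $\tau$ were an even involution such as $(1\,2)(3\,4)$ in $S_4$, then $A_d \cdot \S_g$ would still be $A_d \subsetneq S_d$, and \Cref{product of stabilizers} could not be invoked. I would therefore add the explicit hypothesis $\S_g \not\subseteq A_d$ (or equivalently, that the generator of $\S_g$ is an odd permutation). Beyond this clarification, there is no hard step: the corollary is essentially a one-line consequence of \Cref{product of stabilizers} and the index-$2$ structure of $A_d$ in $S_d$.
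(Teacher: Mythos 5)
Your proposal follows the same overall route as the paper: both reduce the corollary to checking the hypothesis $S_d=\S_f\S_g$ of \Cref{product of stabilizers} and then invoke that proposition. The difference is in how that identity is verified, and here your version is actually the more careful one. The paper counts: it shows all products $\sigma\tau$ with $\sigma\in\S_f$, $\tau\in\S_g$ are distinct, so $|\S_f\S_g|=|A_d|\cdot 2=d!$ and hence $\S_f\S_g=S_d$. But the distinctness argument rests on the assertion that the nontrivial element $\tau'$ of $\S_g$, having order $2$, ``is a transposition, which is an odd permutation.'' That implication is false: an involution in $S_d$ need not be a transposition, and it need not be odd (e.g.\ $(1\,2)(3\,4)$ in $S_4$). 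Your coset computation $A_d\S_g=A_d\cup A_d\tau$ makes the true state of affairs transparent: the identity $S_d=A_d\S_g$ holds if and only if $\tau$ is odd, and when $\tau$ is an even involution one gets $A_d\S_g=A_d\subsetneq S_d$, so neither proof goes through. You are therefore right to flag the missing hypothesis $\S_g\not\subseteq A_d$; as stated, the corollary's proof (in the paper as well as in any argument routed through \Cref{product of stabilizers}) requires it. With that hypothesis added, your argument is complete and correct.
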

\begin{proof}
    Note that $\S_f\S_g=\{\sigma\tau : \sigma\in S_f,\tau\in S_g\}$.
    Furthermore, we show that all such products are distinct.
    Suppose that $\sigma\tau=\sigma'\tau'$ for some $\sigma,\sigma'\in \S_f$ and $\tau,\tau'\in \S_g$.
    If $\tau=\tau'$, then clearly, $\sigma=\sigma'$.
    If $\tau\not=\tau'$, then since $|\S_g|=2$, one of $\tau,\tau'$ is the identity $\iota.$
    Without loss of generality assume that $\tau=\iota$, then $\sigma=\sigma'\tau'$, so $\tau'=\sigma'^{-1}\sigma\in \S_f$.
    Now, $\tau'\in \S_g$ has order 2, so it is a transposition, which is an odd permutation.
    Hence $\tau'\not\in A_d=\S_f$, which is a contradiction.

    Since all the products in $\S_f\S_g$ are distinct, $|\S_f\S_g|=|\S_f||\S_g|=|S_d|$. Hence we have that $\S_f\S_g=S_d$, so by the above proposition, $IJ=(fg)_{S_d}$.
\end{proof}

\begin{ex}
Consider the polynomial ring $\kk[x_1,x_2,x_3]$ and let $f=x_1^2x_2+x_2^2x_3+x_3^2x_1$ and $g=x_1-x_2$. Then $\S_f=A_{3}, \S_g=\{(1\, 2),\iota\}$ and thus $(f)_{S_3}(g)_{S_3}=(fg)_{S_3}$ by \Cref{alt group gives principal}.
\end{ex}

\section{Hilbert functions of principal symmetric ideals}

A symmetric monomial ideal is a monomial ideal $J$ that is closed under the action of the symmetric group, that is, $\sigma\cdot m\in J$ for any $m\in J$ and $\sigma\in S_d$. In this section we consider \Cref{q:HF} which aims to compare the collection of Hilbert functions of psi's to that of symmetric monomial ideals (not necessarily monomial psi's). We are not aware of a characterization of all Hilbert functions attained by psi's. However, in \cite{HSS} the Hilbert function of ``most" psi's is determined. We now review the relevant facts.

Fix $n\in \mathbb N$. A parameter space for the set of principal symmetric ideals generated in degree $n$ is $\P^{N-1}$, where $N=\binom{n+d-1}{n}$.
Indeed, let $M_n$ denote the set of monomials of degree $n$ in $R=\kk[x_1,\ldots, x_d]$ listed as $M_n=\{m_1, \ldots, m_N\}$ in an arbitrary order.
Points in $\P^{N-1}$ parametrize principal symmetric ideals generated in degree $d$ via the assignment 
\begin{align*}
&\Phi: \P^{N-1} \to \text{ principal symmetric ideals} \\
& \Phi(c_1:\cdots: c_N) :=(f_c)_{\sym_n}\,,\qquad \text{where}\quad f_c=\sum_{i=1}^Nc_im_i\,.\qedhere
\end{align*}
The map $\Phi$ above is onto, but not one-to-one. It allows us to formulate a notion of a general principal symmetric ideal, which we now formalize. 
We will say that \emph{a general principal symmetric ideal generated in degree $d$ satisfies property $\mathcal P$} if there exists a non-empty Zariski-open set $U$ of $\P^N$ so that for each  $c\in U$, the principal symmetric ideal $\Phi(c)=(f_c)_{\sym_n}$ satisfies the property $\mathcal P$. The following was shown in \cite{HSS}:

\begin{thm}[Partial statement of {\cite[Theorem 8.4]{HSS}}]\label{HSS HF}
Suppose  ${\rm char}(\kk)=0$ and fix an integer $n\geq 2$. \footnote{Note that the meanings of $n$ and $d$ in this reference are interchanged compared to our notation here.} For sufficiently large  $d$, a general principal symmetric ideal $I$ of ${\kk}[x_1,\ldots, x_d]$ generated in degree $n$ yields a quotient ring  with Hilbert function
\[
 H_{R/I}(i):=\dim_\kk (R/I)_i = \begin{cases}\dim_\kk R_i &\text{if $i\le d-1$}\\
 P(n)-1&\text{if $i=d$}\\
 0 &\text{if $i>d$.}\
 \end{cases}
\]
\end{thm}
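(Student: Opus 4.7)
My plan is to prove the Hilbert function formula in three stages: the low-degree piece is immediate, the interior value $P(n)-1$ is computed via an $S_d$-representation-theoretic analysis of the orbit span in $R_n$, and the top vanishing is obtained by showing $\fm^{n+1}\subseteq I$ for generic $I$, using upper semicontinuity on the parameter space $\P^{N-1}$.

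The low-degree case is formal: since $I=(f)_{S_d}$ has all generators in degree $n$, one has $I_i=0$ for $i<n$, whence $H_{R/I}(i)=\dim_\kk R_i$ in that range. For the main computation, I would decompose $R_n=\bigoplus_\mu V_\mu\otimes \kk^{m_\mu}$ into $S_d$-isotypic components, indexed by irreducible $S_d$-representations $V_\mu$. For $d\ge n$ the trivial isotypic has multiplicity $P(n)$, spanned by the monomial symmetric functions $m_\lambda$ with $\lambda\vdash n$. Because every $S_d$-invariant subspace of $V_\mu\otimes \kk^{m_\mu}$ has the form $V_\mu\otimes W$, the cyclic submodule $\kk[S_d]\cdot f_\mu$ equals $V_\mu\otimes W_\mu$ with $\dim_\kk W_\mu=\rank(f_\mu)$ under the identification $V_\mu\otimes\kk^{m_\mu}\cong \Hom(\kk^{m_\mu*},V_\mu)$. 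For generic $f$ this rank is $\min(\dim V_\mu,m_\mu)$; the multiplicities $m_\mu$ are bounded in terms of $n$ alone while $\dim V_\mu$ grows factorially in $d$ for non-trivial $\mu$, so for $d$ sufficiently large the minimum equals $m_\mu$ on every non-trivial $\mu$ and the trivial component contributes only a line. Summing gives $\dim_\kk V_f=\dim_\kk R_n-P(n)+1$, yielding the claimed interior value $P(n)-1$.

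For the top vanishing it suffices to prove $R_1\cdot V_f=R_{n+1}$, from which $V_f\cdot R_k=R_{n+k}$ follows by induction. Writing $R_n=V_f\oplus W$ with $W\subseteq R_n$ a $(P(n)-1)$-dimensional subspace inside the trivial isotypic, the claim reduces to $R_1\cdot W\subseteq R_1\cdot V_f$. I would verify this at a single explicit polynomial $f_0$, expressing each product $x_j\cdot m_\lambda$ as an $R_1$-linear combination of elements in the $S_d$-orbit of $f_0$, and then propagate to a Zariski-open neighborhood by upper semicontinuity of the function $c\mapsto H_{R/\Phi(c)}(i)$ on $\P^{N-1}$. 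Intersecting the open sets corresponding to the finitely many relevant degrees produces the desired dense open $U$.

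The hardest step is constructing a tractable test polynomial $f_0$ and verifying the containment $R_1\cdot W\subseteq R_1\cdot V_{f_0}$: the interaction between the isotypic decompositions of $R_n$ and $R_{n+1}$, mediated by multiplication by $R_1$, is subtle, and $f_0$ must be chosen so that the ``missing'' invariant directions in $W$ genuinely enter the product. A plausible candidate is a polynomial whose expansion in a basis adapted to the isotypic decomposition is Zariski-generic, so that the multiplication map $V_f\otimes R_1\to R_{n+1}$ attains maximal rank.
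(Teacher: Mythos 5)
This statement is not proved in the paper at all: it is quoted (with a notational swap of $n$ and $d$, per the footnote) from \cite[Theorem 8.4]{HSS}, so there is no internal proof to compare against; you have correctly read the intended case split as $i\le n-1$, $i=n$, $i>n$, which is how the result is actually used in \Cref{thm:HF}. Your first two stages are sound. The low-degree case is formal, and the isotypic analysis of the orbit span $V_f=\kk[S_d]\cdot f\subseteq R_n$ is the right mechanism for the value $P(n)-1$: in characteristic $0$ the central idempotents give $\kk[S_d]\cdot f=\bigoplus_\mu \kk[S_d]\cdot f_\mu$, every submodule of $V_\mu\otimes\kk^{m_\mu}$ has the form $V_\mu\otimes W$, and a generic tensor has rank $\min(\dim V_\mu,m_\mu)$. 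Combined with the representation-stability facts you assert but do not prove --- that the multiplicities $m_\mu$ are bounded in terms of $n$ alone while $\dim V_\mu\to\infty$ for nontrivial $\mu$ (the growth is polynomial in $d$, not factorial, for the irreducibles that actually occur, but that is harmless) --- this yields $\dim_\kk I_n=\dim_\kk R_n-P(n)+1$ for general $f$ and large $d$, consistent with the generator count $\binom{n+d-1}{d-1}-P(n)+1$ recorded in the paper's final section.

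The genuine gap is the third stage. Proving $I_{n+1}=R_{n+1}$, equivalently $R_1\cdot W\subseteq R_1\cdot V_f$ for the $(P(n)-1)$-dimensional invariant complement $W$, is the real content of the theorem, and your proposal reduces it to ``construct a tractable test polynomial $f_0$ and verify the containment'' without constructing $f_0$, without explaining why such an $f_0$ exists, and without indicating how the verification would go. The semicontinuity step that transfers the conclusion from a single point to a dense open subset of $\P^{N-1}$ is standard and correct, but producing that single point is where essentially all of the difficulty of \cite[Theorem 8.4]{HSS} resides; note also that the needed statement is a surjectivity of the multiplication map $R_1\otimes V_f\to R_{n+1}$ whose target decomposes into isotypic pieces different from those of $R_n$, so maximal rank is not automatic and must be checked against the Pieri-type branching of the relevant permutation modules. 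As written, the proposal is a credible outline of the known strategy rather than a proof.
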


In the following we will refer to a psi $I$ or to a polynomial $f$ such that $I=(f)_{S_d}$ as general provided that it satisfies the conclusion of \Cref{HSS HF}. Heuristically, since non-empty Zariski open sets are dense, ``most" polynomials homogeneous $f$ of a fixed degree are general.

\begin{thm}\label{thm:HF}
    Let $\kk$ be a field of characteristic $0$, and let $f \in R = \kk[x_1,\ldots,x_d]$ be a polynomial of degree $n$. If $d$ is sufficiently large satisfying in particular $d \geq {n^n}/n!$ and $d>n$, and if $f$ is general then there exists no monomial symmetric ideal $J$ that satisfies $H_J(n) = H_{(f)_{S_d}}(n)$ for all $n\in \mathbb{N}$.
\end{thm}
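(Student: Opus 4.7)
The plan is to compare, in the single degree $n$, the Hilbert function value $H_{(f)_{S_d}}(n) = \binom{n+d-1}{n} - P(n) + 1$ provided by \Cref{HSS HF} against the restrictive family of values $H_J(n)$ achievable when $J$ is a symmetric monomial ideal. Since all generators of $I := (f)_{S_d}$ lie in degree $n$, $H_I(i) = 0$ for $i < n$, so any $J$ with the same Hilbert function has no generators in smaller degrees; it therefore suffices to derive a contradiction in degree $n$ alone.

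First, I would show that for any such $J$ the degree-$n$ component decomposes as
\[
J_n = \bigoplus_{\ba \in T} R_\ba
\]
where $T$ is some subset of the set of partitions of $n$ (padded with zeros to length $d$): $J_n$ is monomial-spanned and $S_d$-invariant, which forces any $R_\ba$ meeting $J_n$ nontrivially to be contained in $J_n$. Complementing inside $R_n$, the equation $H_J(n) = H_I(n)$ becomes
\[
\sum_{\ba \notin T} \dim_\kk R_\ba = P(n) - 1.
\]

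The main step, which I expect to be the principal obstacle, is a uniform lower bound $\dim_\kk R_\ba \geq d$ valid for every partition $\ba$ of $n$ in $d$ parts, which uses only $d > n$. Using \eqref{eq:dim Ra}: if $\ba$ has $k$ nonzero entries and the multiplicities of its nonzero values are $m_1, \ldots, m_{s-1}$, summing to $k$, then $1 \leq k \leq n \leq d - 1$ and $m_1!\cdots m_{s-1}! \leq k!$, so
\[
\dim_\kk R_\ba = \frac{d!}{(d-k)!\, m_1!\cdots m_{s-1}!} \geq \binom{d}{k} \geq d,
\]
where the last inequality is the standard fact that $\binom{d}{k}$ is minimized at the endpoints $k \in \{1, d-1\}$. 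Combined with the crude estimate $P(n) \leq 2^{n-1} \leq n^n/n!$ (the second inequality a one-line induction) and the hypothesis $d \geq n^n/n!$, the target value $P(n) - 1$ is strictly smaller than any single term appearing on the left-hand side. Hence the sum must be empty, forcing $T$ to contain every partition of $n$, i.e., $J_n = R_n$ and $H_J(n) = \binom{n+d-1}{n} > H_I(n)$ since $P(n) \geq 2$ for $n \geq 2$ (the range of \Cref{HSS HF}), which is the desired contradiction.
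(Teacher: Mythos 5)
Your proposal is correct and follows essentially the same route as the paper: decompose $J_n$ as a direct sum of the spaces $R_\ba$, observe that $H_{R/J}(n)$ is then a sum of terms $\dim_\kk R_\ba$, prove the uniform lower bound $\dim_\kk R_\ba \geq d \geq P(n)$ (you via $\binom{d}{k}$, the paper via the multiplicities $n_i$ — both using $d>n$ and $d\geq n^n/n!$), and contradict the value $P(n)-1$ forced by \Cref{HSS HF}. The only cosmetic differences are your extra intermediate bound $P(n)\leq 2^{n-1}$ and your phrasing of the final contradiction via $T$ being all of the partitions rather than exhibiting a too-small summand.
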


\begin{proof}
   Let $J$ be a symmetric monomial ideal.    Using the notation in \Cref{def: Ra} we have a decomposition of the vector space $R_n$ of homogeneous polynomials of degree $n$ as 
       \begin{equation*}
        R_n = \bigoplus_{|\ba| = n}R_\ba,
    \end{equation*}
    where $\ba$ ranges over the set of partitions of sum $n$.
    Due to the fact that $J$ is symmetric, $R_\ba \cap J \neq 0$ implies $R_\ba \subset J$. Thus there is an induced  decomposition of $J_n$ of the form
    \begin{equation*}
        J_n = J \cap R_n = \bigoplus_{ \substack{|\ba| = n \\  R_\ba \cap J \neq 0}} R_\ba
    \end{equation*}
    and consequently we obtain
    \begin{equation}\label{eq: HF J}
    H_R(n)=  \sum_{ \substack{|\ba| = n }} \dim_k R_\ba \qquad \text{and} \qquad
       H_J(n)=  \sum_{ \substack{|\ba| = n \\  R_\ba \cap J \neq 0 }} \dim_k R_\ba.
    \end{equation}
    We write this concisely as
       \begin{equation}\label{eq:HJ}
    H_{R/J}(n)=   H_R(n)-H_J(n)=  \sum_{ \substack{|\ba| = n \\  R_\ba \cap J = 0 }} \dim_k R_\ba.
       \end{equation}
 Let $\ba$ be a partition of $n$, let $k$ denote the number of distinct values in $\ba$ and $\{n_i\}_{i=1}^k$ the number of occurrences of each value. By \Cref{eq:dim Ra} we have
    \begin{equation}\label{eq: 6.3}
     \dim_\kk R_\ba = \frac{d!}{n_1!n_2!\cdots n_k!}.
    \end{equation}
    A partition $\ba$ of $n$ has at most $n$ parts, where the possible value of the $i$-th part $a_i$ satisfies $0\leq a_i\leq  \frac{n}{i}$. Indeed, the inequality $ia_i\leq |a|=n$ can be deduced from the fact that the parts are ordered non-increasingly and the sum of the first $i$ parts does not exceed $|a|$. So we must have the following upper bound for $P(n)$
    \begin{equation*}
        P(n) \leq \prod_{i=1}^n \frac{n}{i} =\frac{n^n}{n!} \leq d.
    \end{equation*}

   Since $\ba$ is a partition of $n$ having $d$ parts and since $d>n$, no part can be repeated $d$ times as this would imply that $n=|a|$ is divisible by $d$, a contradiction.  Thus $n_i<d$ for each $1\leq i\leq k$ and in particular we have that $k\geq 2$. Since there are $d$ factors in the product  $n_1!n_2!\cdots n_k!$ at least two of which are equal to 1, we deduce that
   \begin{equation}\label{eq: contrad}
   n_1!n_2!\cdots n_k!\leq (d-1)! 1!, \quad \text{ i.e., } \quad \dim_\kk R_\ba= \frac{d!}{n_1!n_2!\cdots n_k!}\geq \frac{d!}{(d-1)!1!}=d \geq P(n).
\end{equation}
    
      Let $n = \deg f$ and  assume that there is some  symmetric monomial ideal $J$ such that $H_J(n) = H_{(f)_{S_d}}(n)$ for all $n \in \mathbb{N}$.
 By \Cref{HSS HF}, for sufficiently large  $d$ and  general $f$ we have
    \begin{equation}\label{eq:HSS}
   H_R(n )-  H_J(n)=   H_R(n )-  H_{(f)_{S_d}}(n) = P(n) -1.
    \end{equation}
\Cref{eq:HSS} and \Cref{eq:HJ} show there exists a partition $\ba$ of $n$ with $d$ parts such that
\[
\dim_\kk R_\ba\leq P(n)-1.
\]
 This contradicts \Cref{eq: contrad}, which yields $\dim_\kk R_\ba\geq P(n)$ for every partition $\ba$.
\end{proof}

Next we consider the case of two variables.

\begin{defn}
    For $f \in \kk[x_1,x_2]$ and $\sigma=(1 \, 2)$ let $g = \gcd(f,\sigma \cdot f)$. The {\em symmetric reduction} of $f$, or $\text{sRed}(f)$ is defined as $\text{sRed}(f) = f/g$.
\end{defn}
Note that the  symmetric reduction of a polynomial is only well defined up to multiplication by a unit. However its degree is well defined and this suffices for our purpose.

We observe that the Hilbert function of a psi in the two-dimensional ring depends only on the degree of the generator and its symmetric reduction.  In the following, for a graded module $M$ we employ the standard notation $M(-i)$ to mean the graded module which has the same underlying set as $M$ and satisfies $M(-i)_n=M_{n-i}$.

\begin{lem}\label{lem: HF 2dim}
  Let $f \in R=\kk[x_1,x_2]$ be homogeneous, let $k=\deg (f)$ and $\ell=\deg(\rm{sRed}(f))$.  Then for every integer $n$ the following holds \footnote{Recall that $H_R(i)=0$ for $i<0$.}
  \[
H_I(n)= 2H_R(n-k) - H_R(n-k-\ell).
\]
\end{lem}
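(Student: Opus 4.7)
The plan is to reduce the Hilbert function of $I = (f)_{S_2} = (f, \sigma \cdot f)$, where $\sigma = (1\,2)$, to that of an ideal generated by two coprime polynomials, for which the Koszul complex provides a minimal graded free resolution. Set $g = \gcd(f, \sigma \cdot f)$ and $h = f/g = \text{sRed}(f)$. Since $\sigma$ is a ring automorphism of $R$, the element $\sigma \cdot g = \gcd(\sigma \cdot f, f)$ is a unit multiple of $g$; consequently $g$ divides $\sigma \cdot f$ and the quotient $(\sigma \cdot f)/g$ is a scalar multiple of $\sigma \cdot h$. This produces the factorization $I = g \cdot (h, \sigma \cdot h)$, and moreover $\gcd(h, \sigma \cdot h) = 1$ by construction.

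Because $R$ is a domain and $g \neq 0$, multiplication by $g$ is injective and induces a graded isomorphism of $R$-modules $(h, \sigma \cdot h)(-\deg g) \xrightarrow{\cong} I$. Since $\deg g = k - \ell$, this immediately yields
\[
H_I(n) = H_{(h, \sigma \cdot h)}(n - k + \ell).
\]
Next, I would use that in the two-variable polynomial ring any pair of coprime homogeneous polynomials forms a regular sequence, so that the Koszul complex
\[
0 \longrightarrow R(-2\ell) \longrightarrow R(-\ell)^{2} \longrightarrow (h, \sigma \cdot h) \longrightarrow 0
\]
is exact. Taking $\kk$-dimensions in degree $m$ yields $H_{(h, \sigma \cdot h)}(m) = 2 H_R(m - \ell) - H_R(m - 2\ell)$.

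Substituting $m = n - k + \ell$ produces the claimed identity $H_I(n) = 2 H_R(n-k) - H_R(n-k-\ell)$. The main substantive point is the opening observation that $g$ is $\sigma$-invariant up to a unit, so that it factors cleanly out of both generators of $I$ and reduces the problem to the coprime case; the remainder is a routine Koszul dimension count. It is worth emphasizing that the exactness of the Koszul complex used here genuinely relies on having only two variables, since for $d > 2$ coprime homogeneous polynomials need not form a regular sequence, which is exactly the reason such a compact formula is special to the two-variable setting.
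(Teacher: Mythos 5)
Your proof is correct and follows essentially the same route as the paper: factor out $g=\gcd(f,\sigma\cdot f)$, observe $I\cong(\mathrm{sRed}(f),\sigma\cdot\mathrm{sRed}(f))(-k+\ell)$, and count dimensions via the Koszul resolution of the coprime pair. The only difference is that the paper treats the degenerate case $\ell=0$ (i.e.\ $f$ $\kk$-symmetric, so $\mathrm{sRed}(f)$ is a unit and the pair $h,\sigma\cdot h$ is not literally a regular sequence) as a separate, trivial case where $I=(f)$ is principal; your formula still checks out there, but you should flag that case rather than subsume it under the regular-sequence claim.
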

\begin{proof}
Set  $I=(f)_{S_d}$ and $g = \gcd(f,\sigma \cdot f)$. Assume first that $f$ is not $\kk$-symmetric, therefore $\ell>0$ and $\deg(g)=k-\ell$. The Hilbert function of $I$ can be deduced from the isomorphism
\[
I=g\left(\text{sRed}(f), \sigma \cdot \text{sRed}(f)\right)\cong \left(\text{sRed}(f), \sigma \cdot \text{sRed}(f)\right)(-k+\ell),
\]
where $C=\left(\text{sRed}(f), \sigma \cdot \text{sRed}(f)\right)$ is generated by a regular sequence. The isomorphism provides the equality $H_I(n)=H_C(n-k+\ell)$ and the Hilbert function of $C$ is obtained from the Koszul complex
\[
0\to R(-2\ell)\to R^2(-\ell)\to C\to 0
\]
which yields  $H_C(n)=2H_R(n-\ell)-H_R(n-2\ell)$. This establishes the claimed formula in the case $\ell > 0$.

Assume now that $f$ is $\kk$-symmetric, therefore $\ell=0$. Then, as $(f)_{S_2}=(f)$ is a principal ideal in the usual sense we have $H_{(f)_{S_2}}(n) =H_R(n-k)$. This agrees with the claim.
\end{proof}

\begin{prop}\label{cor:prob8_2dim}
     Let $f \in \kk[x_1,x_2]$ be homogeneous, let $\ell=\deg(\text{\rm sRed}(f))$ and $k=\deg(f)$.  There exists a monomial symmetric ideal $J$, such that $H_{(f)_{S_2}}(n) = H_{J}(n)$ if and only if $k+\ell$ is even. 
     \end{prop}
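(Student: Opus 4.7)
The plan is to combine the Hilbert-function formula of \Cref{lem: HF 2dim} with a parity analysis of Hilbert functions of symmetric monomial ideals in two variables. Write $I=(f)_{S_2}$, so that \Cref{lem: HF 2dim} provides the explicit expression $H_I(n)=2H_R(n-k)-H_R(n-k-\ell)$ for every $n$.

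For the ``if'' direction, suppose $k+\ell$ is even; the plan is to exhibit a matching symmetric monomial ideal explicitly. With $a=(k+\ell)/2$ and $b=(k-\ell)/2$, which are non-negative integers since $0\le\ell\le k$ and $k+\ell$ is even, set $J=(x_1^{a}x_2^{b})_{S_2}=(x_1^{a}x_2^{b},\,x_1^{b}x_2^{a})$, a symmetric monomial ideal. I would then compute $H_J(n)$ by inclusion-exclusion on multiples: the monomials in $J_n$ are precisely the degree-$n$ multiples of $x_1^a x_2^b$ or of $x_1^b x_2^a$. Each of these two sets has cardinality $H_R(n-k)$, while their intersection consists of multiples of $\lcm(x_1^ax_2^b,\,x_1^bx_2^a)=x_1^ax_2^a$, of which there are $H_R(n-2a)=H_R(n-k-\ell)$. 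Hence $H_J(n)=2H_R(n-k)-H_R(n-k-\ell)=H_I(n)$, as required.

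For the ``only if'' direction, I would exploit a parity obstruction. Any symmetric monomial ideal $J\subset \kk[x_1,x_2]$ decomposes in each degree as a direct sum of those subspaces $R_{\ba}$ with $|\ba|=n$ that are contained in $J$, by the same reasoning used in the proof of \Cref{thm:HF}. In two variables the only partition $\ba$ of $n$ with $\dim_\kk R_\ba=1$ is $\ba=(n/2,n/2)$, which exists only when $n$ is even; every other $R_\ba$ in degree $n$ has dimension $2$. Consequently $H_J(n)$ is odd if and only if $n$ is even and $(x_1x_2)^{n/2}\in J$; in particular $H_J(n)$ odd forces $n$ to be even. Evaluating the formula of \Cref{lem: HF 2dim} at $n=k+\ell$ yields $H_I(k+\ell)=2H_R(\ell)-H_R(0)=2(\ell+1)-1=2\ell+1$, which is odd. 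Therefore if some symmetric monomial ideal $J$ satisfies $H_J(n)=H_I(n)$ for all $n$, then $k+\ell$ must be even.

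The crux is the parity observation above, which is genuinely a two-variable phenomenon: in more variables the dimensions of the $R_\ba$ can be arbitrary multinomial coefficients and no such clean parity constraint survives. The only edge case to verify is $\ell=0$ (in which $a=b=k/2$ and the two proposed generators of $J$ coincide); the inclusion-exclusion then degenerates to an equality of sets and still produces the correct formula $H_J(n)=H_R(n-k)$.
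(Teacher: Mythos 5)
Your proof is correct and follows essentially the same route as the paper: the ``if'' direction exhibits the very same monomial $x_1^{(k+\ell)/2}x_2^{(k-\ell)/2}$, and the ``only if'' direction rests on the same parity obstruction, namely that a symmetric monomial ideal in $\kk[x_1,x_2]$ has even Hilbert function values in odd degrees because every $R_\ba$ with $|\ba|$ odd is two-dimensional. The only cosmetic differences are that you verify $H_J$ by direct inclusion--exclusion instead of invoking \Cref{lem: HF 2dim} for the monomial generator, and you evaluate at the single degree $n=k+\ell$ (where $H_I=2\ell+1$ is odd) rather than at general odd $n>k+\ell$ as the paper does; both instantiations are valid.
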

\begin{proof}
%Assume first that $f$ is not $\kk$-symmetric.
Set $I=(f)_{S_2}$. 
From \Cref{lem: HF 2dim} applied in degree $n>k+\ell$ with $n$ odd we obtain 
\[
H_I(n)=2H_R(n-k) - (n-k-\ell+1)\equiv n-k-\ell+1 \pmod{2}.
\]
Suppose that $J$ is a symmetric monomial  ideal  such that $H_{(f)_{S_2}}(n) = H_{J}(n)$ for all $n\in \N$. Fix $n$ to be an odd integer. Then since $d=2$ and there are no partitions $\ba$ of an odd integer $n$ with two equal parts we see by \Cref{eq:dim Ra} that $\dim_\kk R_\ba=2$ for all $\ba$ with $|\ba|=n$. Consequently by \Cref{eq: HF J}, $H_J(n)$ is even. Since $H_J(n)=H_I(n)\equiv k+\ell \pmod{2}$ we conclude that $k+\ell$ must be even.

Conversely, assume $k+\ell$ is even, set $a = \frac{k-\ell}{2}$ and $g = x_1^{a+\ell}x_2^a$. By \Cref{lem: HF 2dim} the Hilbert function of $J =(g)_{S_2}$ is uniquely determined by $\deg(\text{sRed}(g))$ and  $\deg(g)$. Since $\text{sRed}(g) = x_1^\ell$, which has degree $\ell$ and since $\deg(g) = k = \deg(f)$ we obtain $H_{(g)_{S_2}}(n) = H_I(n)$.

%Assume now that $f$ is $\kk$-symmetric, therefore $k=\ell$. Then, as $(f)_{S_2}}=(f)$ we have $H_{(f)_{S_2}}(n) =H_R(n-k)=n-k+1$.If  $J$ is a symmetric monomial  ideal  such that $H_{(f)_{S_2}}(n) = H_{J}(n)$, taking $n$ to be odd yields $H_{(f)_{S_2}}(n) \equiv k\pmod{2}$ and $H_J(n)\equiv 0\pmod {2}$ as before. Thus $k=\ell$ must be even. Conversely, if $k$ is even then taking $a=\frac{k}{2}$ and $g=x_1^ax_2^a$ yields the desired $J=(g)$.
\end{proof}

\section{Open problems}\label{s:conjectures}

Satisfactory partial answers to \Cref{q:recognition} and \Cref{q:powers_products} of the introduction have been provided in this manuscript, however these questions are still open in full generality.  We single out a few additional questions which result from our work.

In view of \Cref{ex:d>2}, where the polynomial $x_1-x_2$ symmetrically generating the relevant ideal is $\kk$-invariant under the action of the subgroup $S_2$ of $S_d$ we ask:
\begin{quest}
Is it true that if $I$ and $I^2$  are principal symmetric ideals, then $I=(f)_{S_d}$ for some polynomial $f$ which is $\kk$-invariant under the action of a symmetric subgroup of $S_d$?
\end{quest}

We remark that \cite[Lemma 5.1]{HSS} and \Cref{HSS HF} give an upper bound on the minimal number of generators of any principal symmetric ideal $I$ generated by a polynomial of degree $n$ in the ring  $\kk[x_1, \ldots, x_d]$ with $d$ sufficiently large, namely 
\[
\mu(I)\leq \binom{n+d-1}{d-1}-P(n)+1.
\]
On the other hand several of our results proceed by way of estimating {\em lower bounds} on the number of minimal generators of various ideals as well as their products and powers; see for example \Cref{min num gens of product 1}.  It is therefore of interest to continue the search for such bounds.

\begin{probl}
Find (nontrivial) upper and lower bounds for the number of minimal generators of the product of two homogeneous principal symmetric ideals. 

Find (nontrivial) upper and lower bounds for the number of minimal generators of the powers of a homogeneous principal symmetric ideal.
\end{probl}

Finally, \Cref{thm:HF} opens up the search for a non-monomial ideal analogue of Macaulay's theorem in the category of homogeneous symmetric ideals. Evidence from \cite{HSS} suggests that binomial ideals could replace monomial ideals in the desired result. 

\begin{quest}\label{q: binomial}
If $\kk$ is a field of characteristic zero and $I$ is a  homogeneous symmetric ideal of $\kk[x_1,  \ldots, x_n]$ that does not have  $\kk$-symmetric polynomials in any set of minimal generators, does there exists a symmetric ideal $J$, generated by monomials and binomials, so that $H_I (n) = H_J (n)$ for all $n\in \N$?
\end{quest}

The answer to \Cref{q: binomial} is affirmative if $I$ is a general psi by \cite[Proposition 6.12]{HSS} and \Cref{HSS HF}. The answer is also affirmative in two-dimensional polynomial rings  without any restriction on the generators of $I$, as we show next.

\begin{prop}\label{cor:2-dim_binom}
    Let $\kk$ be a field of characteristic different from two and let $f\in \kk[x_1,x_2]$ be a homogeneous polynomial. Then there exists a symmetric ideal $J$ generated by binomials or monomials such that $H_J(n) = H_{(f)_{S_2}}(n)$
\end{prop}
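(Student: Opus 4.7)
The plan is to exploit \Cref{lem: HF 2dim}, which shows that the Hilbert function $H_{(f)_{S_2}}(n)$ depends on $f$ only through the two invariants $k = \deg(f)$ and $\ell = \deg(\text{sRed}(f))$. Consequently, it suffices to exhibit a homogeneous polynomial $h \in \kk[x_1,x_2]$ with $\deg(h) = k$ and $\deg(\text{sRed}(h)) = \ell$ such that the symmetric ideal $(h)_{S_2}$ is generated by monomials or binomials. The parity of $k+\ell$ will dictate which type of generator works.

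First I would dispose of the even case. If $k+\ell$ is even, \Cref{cor:prob8_2dim} already produces a symmetric monomial ideal with the required Hilbert function (explicitly, $J = (x_1^{(k+\ell)/2} x_2^{(k-\ell)/2})_{S_2}$), so nothing further is needed. The substantive case is $k+\ell$ odd, which forces $\ell < k$ and hence the nonnegativity of the exponents introduced below. Set
\[
a = \frac{k+\ell-1}{2}, \qquad b = \frac{k-\ell-1}{2}, \qquad h = x_1^a x_2^b(x_1+x_2) = x_1^{a+1}x_2^b + x_1^a x_2^{b+1}.
\]
Then $h$ is a homogeneous binomial of degree $k$, and $J = (h)_{S_2}$ is symmetric with the two generators $h$ and $\sigma\cdot h$, both binomials.

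To finish I would verify the symmetric reduction invariant of $h$: since $\sigma\cdot h = x_1^b x_2^a (x_1+x_2)$ and $a \geq b$, a direct computation gives $\gcd(h,\sigma\cdot h) = x_1^b x_2^b(x_1+x_2)$, so $\text{sRed}(h) = x_1^{a-b} = x_1^\ell$, which has degree $\ell$. Invoking \Cref{lem: HF 2dim} twice yields $H_J(n) = H_{(h)_{S_2}}(n) = H_{(f)_{S_2}}(n)$ for all $n\in\N$, completing the proof.

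I do not foresee a serious obstacle: the argument is a parity bookkeeping combined with an explicit binomial construction. The only point requiring a little care is checking that $\text{sRed}(h)$ has precisely the right degree, and this rests on the routine $\gcd$ calculation above; the hypothesis on the characteristic is used only insofar as it guarantees the formulas of \Cref{lem: HF 2dim} apply without degeneration.
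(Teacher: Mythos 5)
Your proof is correct and follows essentially the same strategy as the paper: reduce via \Cref{lem: HF 2dim} to matching the invariants $k=\deg(f)$ and $\ell=\deg(\mathrm{sRed}(f))$ with an explicit monomial or binomial. The paper avoids your parity case split by using the single uniform choice $g = x_1^\ell(x_1^{k-\ell}+x_2^{k-\ell})$, which has $\deg(g)=k$ and $\mathrm{sRed}(g)=x_1^\ell$ for every parity of $k+\ell$; your two constructions are both valid, just slightly longer to state.
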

\begin{proof}
Let $\ell =\deg( \text{sRed}(f))$, $k = \deg f\geq \ell$. Consider the binomial 
\[ g = x_1^\ell x_2^{k-\ell} + x_1^k = x_1^\ell(x_2^{k-\ell} + x_1^{k-\ell}).
\] Thus $\deg(g)=k$ and since $\deg(\text{sRed}(g)) = \ell$, \Cref{lem: HF 2dim} gives that $H_{(f)_{S_2}}(n) = H_{(g)_{S_2}}(n)$.
\end{proof}

%\bigskip

\paragraph{\bf Acknowledgements.} We thank Matthew Hase-Liu for comments which improved this manuscript. We thank the referee for pointing out a flaw in the proof of \Cref{negative order type analysis}, which we subsequently remedied.  Computations leading to several results in this paper were performed with Macaulay2 \cite{M2}. We acknowledge the support of NSF DMS--2218374 for the 2023 Polymath Jr. program, which facilitated our collaboration. Geraci and Seceleanu were also partially supported by NSF DMS--2101225.

%\bigskip

\end{document}